\numberwithin{equation}{section}
\newtheorem{thm}{Theorem}[section]
\newtheorem{lem}[thm]{Lemma}
\newtheorem{defin}[thm]{Definition}
\begin{document}

\begin{center}
\textbf{{\large {\ Forward and Inverse Problems for a Langevin-Type Fractional Equation Involving  Non-Local Time Condition }}}\\
\medskip \textbf{Fayziev Yusuf $^{1},^{2},^{3}$, Jumaeva Shakhnoza $^{2}$}\\
\textit{fayziev.yusuf@mail.ru,     shahnozafarhodovna79@gmail.com}\\
\medskip \textit{\ $^{1}$ National University of Uzbekistan, Tashkent, Uzbekistan; \\ $^{2}$ V.~I.~Romanovskiy Institute of Mathematics, Uzbekistan Academy of Sciences,
\\$^{3}$Tashkent, Uzbekistan;  Karshi State University, Karshi, Uzbekistan }
\end{center}

\textbf{Abstract}: The paper addresses the formulation and analysis of direct and inverse problems for a  Langevin-type fractional differential equation under a non-local condition imposed on the time variable. An additional condition for solving the inverse problem has the
 form $u(t_0)=\omega$. The existence and uniqueness of a solution to the problems are established, and stability inequalities are derived. Criteria ensuring the uniqueness of the solution are identified.

\vskip 0.3cm \noindent {\it AMS 2000 Mathematics Subject
Classifications} :
Primary 35R11; Secondary 34A12.\\
{\it Key words}: Non-local time condition, Langevin-type equation, Caputo derivatives.

\section{Introduction}

The Langevin equation is a fundamental mathematical model in physics, formulated to describe systems influenced by random fluctuations. It is particularly effective in capturing stochastic dynamics in environments subject to thermal noise, such as those observed in Brownian motion. The classical Langevin equation, introduced by Paul Langevin in 1908 (see \cite{Langevin}), models Brownian motion by describing the velocity evolution of a particle subjected to friction and random forces. 

The study of fractional differential equations has become a significant area of research over the past few decades due to their ability to model memory and hereditary properties in various physical, biological, and engineering systems. Researchers have focused on developing theoretical aspects, solution methods, and applications for fractional differential equations.  The study of mathematical models has shifted from integer to fractional-order differential operators. For application details in physics, biology, chemistry, and medical sciences, please refer to the relevant works \cite{Magin}-\cite{Carvalho}. 

In recent years, there has been a growing interest in studying non-local, nonlinear, fractional-order boundary value problems, both single-valued and multivalued. The interest in this topic stems from the use of non-local and integral conditions in modelling real-world situations in applied and biological sciences. See [11-23] and related references for more information and examples.

Building on the widespread use of fractional calculus, the classical Langevin equation was modified by introducing a fractional-order derivative in place of the standard derivative, leading to its fractional form. Examples include motor single-file diffusion \cite{Eab}, the Kramers–Fokker–Planck equation and the Langevin equation \cite{Eule}, control systems \cite{West} and others. Lim et al. introduced a new form of the Langevin equation containing two different fractional orders, which has made it possible to investigate fractal processes in a more flexible environment in \cite{Lim}. For some recent works on the Langevin fractional equation, see the papers \cite{Aydin2023TwoOrder, Lmou2024Hybrid, Fa2023Nonlinear, Abdeljawad2023Hybrid, Zhao2022ML} and the references cited therein.

Let $H$ be a separable Hilbert space and $A: D(A) \rightarrow H$ be an arbitrary unbounded positive self-adjoint operator with the domain of definition $D(A)$. We assume that the operator $A$ has a complete orthonormal system of eigenfunctions $\{v_k\}$ and a countable set of positive eigenvalues $\lambda_k:0<\lambda_1 \leq \lambda_2...\rightarrow +\infty$. The sequence $\{\lambda_k\}$ has no finite limit points.

Let $C((a,b); H)$   be the set of continuous vector-valued functions $y(t)$ on $t\in (a,b)$ with values in $H$.

Let $AC[0, T]$ be the set of absolutely continuous functions defined on $[0, T]$ and let $AC([0, T]; H)$ stand for a space of absolutely continuous functions $y(t)$ with values in $H$ (see, \cite{Fomin} p.339).

In this paper, we consider the following non-local  boundary value problem for a Langevin equation with different order Caputo fractional derivatives: 
\begin{equation}\label{prob}
\begin{cases}
    D_{t}^{\beta}(D_t^{\alpha} u(t)) +D_t^{\beta} (A u(t))=f(t), \quad 0<t \leq T,
\\
    u(T)=\gamma u(0)+\varphi,
\\
D_t^{\alpha} u(+0)=\psi,
\end{cases}
\end{equation} where $0<\alpha<1$, $0<\beta<1$; $\varphi,\psi \in H$, $f(t) \in C([0,T];H)$, $\gamma$ is a constant, and $D_t^\alpha$,$D_t^\beta$ are Caputo fractional derivatives.

\section{Preliminaries}

In this section, we present several pieces of data about fractional derivatives, the Mittag-Leffler functions, and operators, which we will use below. 

The definitions of fractional integrals and derivatives for the function $h:\mathbb{R}_+ \rightarrow H$ are discussed in detail in \cite{Lizama}. The fractional integral of order $\sigma$ for a function $h(t)$ defined on $\mathbb{R}_+$ is given by:
$$
I_t^\sigma h(t)=\frac{1}{\Gamma(\sigma)} \int_0^t \frac{h(\xi)}{(t-\xi)^{1-\sigma}} d\xi, \quad t>0,
$$
where $\Gamma(\sigma)$ is the Euler gamma function. Using this definition, the Caputo fractional derivative of order $\gamma \in (0,1)$ can be defined as:
$$
D_t^\gamma h(t)=I_t^{1-\gamma} \frac{d}{dt}h(t).
$$
 Let $\epsilon$ be an arbitrary real number. The power of the operator 
$A$ is defined by the following:
$$
A^\epsilon h=\sum_{k=1}^\infty \lambda_k^\epsilon h_k v_k,
$$
where $h_k$ is the Fourier coefficient of the function $h \in H$, i.e., $h_k=(h,v_k)$. The domain of this operator is defined as:
$$
D(A^\epsilon)=\{h \in H:\sum_{k=1}^\infty \lambda_k^{2\epsilon} |h_k|^2<\infty\}.
$$
For elements of $D(A^{\epsilon})$ we introduce the norm
$$
||h||^2_\epsilon =\sum\limits_{k=1}^\infty \lambda_k^{2\epsilon} |h_k|^2.
$$
The function $$E_{\alpha,\mu} (z)=\sum_{n=0}^\infty \frac{z^n}{\Gamma(\alpha n+\mu)}$$  is called the Mittag-Leffler function with two parameters( see \cite{Dzha}, p 134), where  $0<\alpha<1$, $\mu \in \mathbb{C}$.

We present some asymptotic estimates for the Mittag-Leffler function:

\begin{lem}\label{Lemma1} 
Let $0<\alpha<1$ and $ \mu \in \mathbb{C}$. For any $t\geq 0$ one has (see \cite{Dzha}, p. 136)
$$
|E_{\alpha, \mu}(-t)|\leq \frac{C}{1+t},
$$
where  constant $C$  doesn't depend on $t$ and $\alpha$. 
\end{lem}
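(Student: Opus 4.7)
The plan is to split the argument into two regimes joined by a compactness argument, and to leverage the classical integral (Hankel contour) representation of $E_{\alpha,\mu}$ to control the large-$t$ regime.

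First I would handle the bounded regime $0 \le t \le 1$. Since $E_{\alpha,\mu}(-t)=\sum_{n\ge0}(-t)^n/\Gamma(\alpha n+\mu)$ converges uniformly on $[0,1]$, I would bound the partial sums termwise by noting that $1/|\Gamma(\alpha n+\mu)|$ is uniformly bounded in $\alpha\in(0,1)$ and $n\ge0$ (away from the real non-positive integers in the argument, which for fixed $\mu$ is automatic for $n$ sufficiently large, and only finitely many $n$ require a direct check). This gives $|E_{\alpha,\mu}(-t)|\le K$ for all $t\in[0,1]$ uniformly in $\alpha$. Since $1+t\le 2$ on this interval, the desired inequality $|E_{\alpha,\mu}(-t)|\le C/(1+t)$ holds with $C=2K$.

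For the far regime $t\ge 1$, I would use the Hankel-type integral representation
\[
E_{\alpha,\mu}(-t)=\frac{1}{2\pi i}\int_{\mathrm{Ha}}\frac{\zeta^{\alpha-\mu}e^{\zeta}}{\zeta^{\alpha}+t}\,d\zeta,
\]
valid for $0<\alpha<1$, with $\mathrm{Ha}$ a Hankel contour beginning and ending at $-\infty$ and encircling the origin; the denominator has no zero since $-t<0$ while $\zeta^{\alpha}$ lies in a sector of opening $\alpha\pi<\pi$. Parametrizing Ha by $\zeta=re^{\pm i\theta}$ with $\theta\in(\alpha\pi,\pi)$ and a small circle of radius $\varepsilon$, I would split the integral into the two rays and the arc and estimate each piece. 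The key pointwise bound is
\[
\Bigl|\frac{\zeta^{\alpha-\mu}e^{\zeta}}{\zeta^{\alpha}+t}\Bigr|\le \frac{r^{\alpha-\mathrm{Re}\,\mu}e^{r\cos\theta}}{t-r^{\alpha}\cos(\alpha\theta)} \quad\text{(for $r^{\alpha}\cos(\alpha\theta)<t$)},
\]
and splitting the ray integrals at $r=t^{1/\alpha}/2$ (or, equivalently, rescaling $r=ts$) yields, after combining the two contributions, the bound $|E_{\alpha,\mu}(-t)|\le C'/t$ with $C'$ depending on $\mu$ and the choice of $\theta$ but not on $\alpha$, provided $\theta$ is chosen strictly between $\alpha\pi$ and $\pi$.

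The principal obstacle is precisely the uniformity in $\alpha$ in the second step: as $\alpha\to 1^-$ the admissible sector $\alpha\pi<\theta<\pi$ shrinks, and the denominator $t-r^{\alpha}\cos(\alpha\theta)$ can approach zero. The fix is to take $\theta$ depending on $\alpha$, say $\theta=(\alpha+1)\pi/2$, so that $\cos(\alpha\theta)\le -c_0<0$ is bounded away from zero uniformly, and $\cos\theta\le -c_0<0$ provides the exponential decay on both rays. With this choice the constants in the ray-integral estimates become explicit functions of $c_0$ and $\mu$ but independent of $\alpha$. Combining the bound $|E_{\alpha,\mu}(-t)|\le C'/t$ for $t\ge 1$ with the uniform bound on $[0,1]$, and observing that $1/t\le 2/(1+t)$ for $t\ge 1$, yields the claimed inequality with a constant $C$ depending only on $\mu$.
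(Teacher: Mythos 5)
The paper offers no proof of this lemma at all---it is simply quoted from Dzherbashian's book---so the only comparison available is with the classical argument you are reconstructing. Your overall strategy (a direct bound from the series on a compact $t$-interval, plus the Hankel-contour representation of $E_{\alpha,\mu}$ for large $t$) is exactly the standard route (Podlubny, Thm.~1.6; Gorenflo et al.), and for each \emph{fixed} $\alpha\in(0,1)$ it does produce $|E_{\alpha,\mu}(-t)|\le C(\alpha,\mu)/(1+t)$, which is all this paper actually needs, since $\alpha$ and $\beta$ are fixed throughout.

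The difficulty is precisely the point you single out as the crux, the uniformity of $C$ in $\alpha$, and there your argument does not close. First, on $[0,1]$ the observation that each $1/|\Gamma(\alpha n+\mu)|\le K$ only yields $\sum_n t^n K=K/(1-t)$, which is unbounded as $t\to 1^-$; the summable bound $\sum_n 1/|\Gamma(\alpha n+\mu)|$ is finite for each fixed $\alpha$ but grows like $1/\alpha$ as $\alpha\to 0^+$ (the first $\sim 1/\alpha$ terms are each of size $\approx 1/|\Gamma(\mu)|$), so the termwise absolute-value estimate cannot be uniform in $\alpha$. Second, with $\theta=(\alpha+1)\pi/2$ one has $\cos\theta=-\sin(\alpha\pi/2)\to 0$ as $\alpha\to 0^+$, so the factor $e^{r\cos\theta}$ loses its decay and the ray integrals are not uniformly controlled; moreover $\alpha\theta=\alpha(\alpha+1)\pi/2<\pi/2$ for $\alpha<(\sqrt{5}-1)/2$, so $\cos(\alpha\theta)>0$ there, contradicting your claim that it is $\le -c_0<0$. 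Third, the sign in your denominator estimate is reversed: since $\left|\zeta^{\alpha}+t\right|\ge\left|t+r^{\alpha}\cos(\alpha\theta)\right|$, the case $\cos(\alpha\theta)\ge 0$ is the \emph{good} one (denominator $\ge t$ for free), while $\cos(\alpha\theta)<0$ is the dangerous one, where the real part vanishes at $r^{\alpha}|\cos(\alpha\theta)|=t$ and one must fall back on $r^{\alpha}|\sin(\alpha\theta)|$, which degenerates as $\alpha\theta\to\pi$; your choice of $\theta$ therefore installs the bad regime near $\alpha=1$ rather than avoiding it. I would either prove the lemma with $C=C(\alpha,\mu)$, which suffices here, or, if uniformity in $\alpha$ is really wanted, obtain it from structural facts (e.g.\ for $\mu=1$, complete monotonicity gives $0<E_{\alpha}(-t)\le (1+t/\Gamma(1+\alpha))^{-1}$ with an absolute constant) rather than from the contour estimate.
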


\begin{lem}\label{Lemma2}
Let $\alpha>0$ and $\lambda \in \mathbb{C}$ , then the following relation holds (see \cite{Kilbas}, p. 78):
$$D_t^\alpha E_{\alpha,1}(\lambda t^\alpha)=\lambda E_{\alpha,1}(\lambda t^\alpha).$$ 
\end{lem}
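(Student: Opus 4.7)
The plan is to verify the identity by term-by-term computation on the defining power series, exploiting the fact that $E_{\alpha,1}(\lambda t^\alpha)$ is an entire function in $\lambda t^\alpha$ and so its series converges locally uniformly, which will legitimize differentiating under the sum and interchanging the Riemann--Liouville integral with summation.

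First I would write
\begin{equation*}
E_{\alpha,1}(\lambda t^\alpha)=\sum_{n=0}^\infty \frac{\lambda^n\, t^{\alpha n}}{\Gamma(\alpha n+1)},
\end{equation*}
and, using $\Gamma(\alpha n+1)=\alpha n\,\Gamma(\alpha n)$, compute the classical derivative term-by-term:
\begin{equation*}
\frac{d}{dt} E_{\alpha,1}(\lambda t^\alpha)=\sum_{n=1}^\infty \frac{\lambda^n\, t^{\alpha n-1}}{\Gamma(\alpha n)}.
\end{equation*}
Next, by the definition $D_t^\alpha=I_t^{1-\alpha}\tfrac{d}{dt}$ given in the preliminaries, I apply $I_t^{1-\alpha}$ to each summand. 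The key ingredient is the beta-integral identity
\begin{equation*}
I_t^{1-\alpha} t^{\alpha n-1}=\frac{1}{\Gamma(1-\alpha)}\int_0^t (t-\xi)^{-\alpha}\xi^{\alpha n-1}\,d\xi=\frac{\Gamma(\alpha n)}{\Gamma(\alpha(n-1)+1)}\, t^{\alpha(n-1)},
\end{equation*}
which follows from the standard Euler beta formula and simplifies because the factors $\Gamma(1-\alpha)$ cancel. Substituting this back and observing that the $\Gamma(\alpha n)$ factor cancels the denominator from the differentiated series, I obtain
\begin{equation*}
D_t^\alpha E_{\alpha,1}(\lambda t^\alpha)=\sum_{n=1}^\infty \frac{\lambda^n\, t^{\alpha(n-1)}}{\Gamma(\alpha(n-1)+1)}=\lambda \sum_{m=0}^\infty \frac{\lambda^m\, t^{\alpha m}}{\Gamma(\alpha m+1)}=\lambda\, E_{\alpha,1}(\lambda t^\alpha),
\end{equation*}
after the index shift $m=n-1$, which is the required identity.

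The main obstacle is a rigour issue rather than an algebraic one: one must justify pulling $d/dt$ and $I_t^{1-\alpha}$ inside the infinite sum. For the differentiation, I would invoke uniform convergence of the differentiated series on compact $t$-intervals, which follows from the super-exponential decay of $1/\Gamma(\alpha n)$; for the fractional integral, the same bound together with Fubini/Tonelli on $[0,t]$ justifies the interchange with the sum. Once these two exchanges are established, the rest is the routine gamma-function bookkeeping indicated above.
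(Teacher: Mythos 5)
Your argument is correct and is essentially the standard proof: the paper gives no proof of this lemma, citing Kilbas et al., where the identity is established by exactly this term-by-term computation with the Euler beta integral $I_t^{1-\alpha}t^{\alpha n-1}=\frac{\Gamma(\alpha n)}{\Gamma(\alpha(n-1)+1)}t^{\alpha(n-1)}$. Two cosmetic remarks: the formula $D_t^\alpha=I_t^{1-\alpha}\frac{d}{dt}$ used here presupposes $0<\alpha<1$ (the only case the paper needs, though the lemma is stated for $\alpha>0$), and the differentiated series contains the term $t^{\alpha-1}$, so it converges uniformly only on $[\delta,T]$ with $\delta>0$; the interchange with $I_t^{1-\alpha}$ is nevertheless justified since $t^{\alpha-1}$ is integrable at the origin, exactly as your Fubini/Tonelli step asserts.
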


\begin{lem}\label{Lemma3}
For $Re \gamma >0$, $Re \mu >0$, $\lambda \in \mathbb{R}$, the following relation is valid (see \cite{Gor}, p. 87):
$$
    D_t^\gamma\left( t^{\mu-1}E_{\alpha,\mu}(\lambda t^\alpha)\right)=t^{\mu-\gamma-1} E_{\alpha,\mu-\gamma}(\lambda t^\alpha).
$$
\end{lem}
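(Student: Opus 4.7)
The plan is to prove the formula by expanding the Mittag-Leffler function into its defining power series and applying the Caputo derivative term by term. Starting from
$$
t^{\mu-1}E_{\alpha,\mu}(\lambda t^\alpha)=\sum_{n=0}^\infty \frac{\lambda^n}{\Gamma(\alpha n+\mu)}\, t^{\alpha n+\mu-1},
$$
the reduction is to the single identity $D_t^{\gamma}t^{p}=\frac{\Gamma(p+1)}{\Gamma(p-\gamma+1)}t^{p-\gamma}$ for the relevant exponents $p=\alpha n+\mu-1$. I would derive that identity directly from the definition $D_t^{\gamma}h=I_t^{1-\gamma}h'$: differentiating gives $p\,t^{p-1}$, and then applying $I_t^{1-\gamma}$ reduces to the Beta integral $\int_0^1(1-u)^{-\gamma}u^{p-1}\,du=\frac{\Gamma(1-\gamma)\Gamma(p)}{\Gamma(p-\gamma+1)}$, valid because $\mathrm{Re}\,\gamma<1$ and the resulting $p$ is admissible.

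Applying this termwise produces
$$
D_t^{\gamma}\!\left(t^{\alpha n+\mu-1}\right)=\frac{\Gamma(\alpha n+\mu)}{\Gamma(\alpha n+\mu-\gamma)}\,t^{\alpha n+\mu-\gamma-1},
$$
so that the $\Gamma(\alpha n+\mu)$ factor cancels with the denominator of the series coefficient, leaving
$$
\sum_{n=0}^\infty \frac{\lambda^n}{\Gamma(\alpha n+\mu-\gamma)}\,t^{\alpha n+\mu-\gamma-1}=t^{\mu-\gamma-1}E_{\alpha,\mu-\gamma}(\lambda t^\alpha),
$$
which is exactly the claimed expression. The algebraic part of the argument is thus quite short, and the whole formula really lives in how $I_t^{1-\gamma}$ acts on $t^{p-1}$.

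The main obstacle, and the only nontrivial part of the write-up, is justifying that the operator $D_t^{\gamma}$ commutes with the infinite sum. For this I would fix an arbitrary compact subinterval $[t_1,T]\subset(0,T]$ and invoke the asymptotics of Lemma \ref{Lemma1} (or, equivalently, the standard ratio estimate $\Gamma(\alpha n+\mu)/\Gamma(\alpha n+\mu-\gamma)\sim(\alpha n)^{\gamma}$) to show that the series obtained by differentiating term by term converges uniformly in $t\in[t_1,T]$. Uniform convergence of both the original series and the termwise-differentiated series, together with the continuity of the integral operator $I_t^{1-\gamma}$ on $C[t_1,T]$, allows exchanging $I_t^{1-\gamma}$ and $\sum$, and similarly justifies the classical differentiation. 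The mild singularity of $t^{\mu-\gamma-1}$ at the origin when $\mu<\gamma+1$ is handled exactly as in the scalar case treated in \cite{Gor}, so no new difficulty arises beyond what is already present in Lemma \ref{Lemma2}.
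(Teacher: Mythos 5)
The paper does not actually prove this lemma --- it is quoted from \cite{Gor} --- so your series computation is the natural thing to supply, and on the parameter range where the lemma is used in the paper ($\mu=\alpha+1$, hence $\mathrm{Re}\,\mu>1$) your argument is sound: the Beta-integral evaluation of $D_t^\gamma t^p$ is correct for $\mathrm{Re}\,p>0$, the cancellation of $\Gamma(\alpha n+\mu)$ is correct, and the interchange of $D_t^\gamma$ with the sum is legitimate --- though it is more cleanly justified by absolute convergence of $\sum_n \frac{|\lambda|^n(\alpha n+\mu-1)}{\Gamma(\alpha n+\mu)}\int_0^t(t-\xi)^{-\gamma}\xi^{\alpha n+\mu-2}\,d\xi$ together with Fubini than by uniform convergence on a compact $[t_1,T]\subset(0,T]$: the operator $I_t^{1-\gamma}$ integrates from $0$, so estimates bounded away from the origin do not by themselves control the interchange.

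The genuine gap is hidden in the clause ``the resulting $p$ is admissible.'' The lemma asserts the identity for all $\mathrm{Re}\,\mu>0$, and the $n=0$ term of the series has exponent $p=\mu-1$, which may be zero or have negative real part. With the paper's (Caputo) definition $D_t^\gamma h=I_t^{1-\gamma}h'$, the power rule you invoke fails precisely there: for $p=0$ one has $D_t^\gamma 1=0$, not $t^{-\gamma}/\Gamma(1-\gamma)$, and for $-1<\mathrm{Re}\,p<0$ the integral $\int_0^t(t-\xi)^{-\gamma}\xi^{p-1}\,d\xi$ defining $I_t^{1-\gamma}(pt^{p-1})$ diverges at $\xi=0$. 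Consequently your proof establishes the identity only for $\mathrm{Re}\,\mu>1$; for $\mu=1$ the Caputo version of the formula is in fact false as stated --- it disagrees with Lemma \ref{Lemma2} by exactly the term $t^{-\gamma}/\Gamma(1-\gamma)$, as one checks using Lemma \ref{Lemma4}. The formula on p.~87 of \cite{Gor} is for the Riemann--Liouville derivative, for which $D_t^\gamma t^p=\frac{\Gamma(p+1)}{\Gamma(p-\gamma+1)}t^{p-\gamma}$ holds for all $\mathrm{Re}\,p>-1$ and your termwise argument then goes through for all $\mathrm{Re}\,\mu>0$. So either restrict the statement to $\mathrm{Re}\,\mu>1$ (which covers every application in the paper) or observe explicitly that for $\mu>1$ the function $t^{\mu-1}E_{\alpha,\mu}(\lambda t^\alpha)$ vanishes at $t=0$, so its Caputo and Riemann--Liouville derivatives coincide and the cited formula may be transferred.
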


\begin{lem}\label{Lemma4} 
If $\alpha>0$, $\mu \in \mathbb{C}$, then the following recurrence relation holds (see \cite{Gor}, p. 57)
$$
    E_{\alpha,\mu}(z)=\frac{1}{\Gamma(\mu)}+zE_{\alpha,\mu+\alpha}(z).
$$
\end{lem}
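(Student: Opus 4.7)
The plan is to prove Lemma 4 directly from the series definition of the two-parameter Mittag-Leffler function. Starting from
$$E_{\alpha,\mu}(z) = \sum_{n=0}^\infty \frac{z^n}{\Gamma(\alpha n + \mu)},$$
I would isolate the $n=0$ term, which contributes $1/\Gamma(\mu)$, and then treat the remaining tail $\sum_{n=1}^\infty z^n/\Gamma(\alpha n + \mu)$ by the substitution $m=n-1$. After reindexing, the tail becomes
$$z\sum_{m=0}^\infty \frac{z^m}{\Gamma(\alpha m + \mu + \alpha)},$$
and the inner series is, by definition, $E_{\alpha,\mu+\alpha}(z)$. Combining the two pieces yields the claimed identity.

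The only item requiring a moment of justification is that the splitting off of the $n=0$ term and the reindexing are legitimate. This is immediate because, for $\alpha>0$, the series defining $E_{\alpha,\mu}(z)$ has infinite radius of convergence (it defines an entire function of $z$) and is therefore absolutely convergent, so rearrangement and term-by-term manipulation are justified for every $z\in\mathbb{C}$ and every $\mu\in\mathbb{C}\setminus\{0,-1,-2,\dots\}$; the standard convention $1/\Gamma(\mu)=0$ at non-positive integer $\mu$ makes the identity valid for all $\mu\in\mathbb{C}$.

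I do not anticipate a genuine obstacle: the lemma is essentially a rewriting of the defining power series, and none of the more delicate tools from the preliminaries (asymptotic estimates as in Lemma 1, or the fractional differentiation identities of Lemmas 2--3) are needed here. The argument is a one-line index shift wrapped in a convergence remark.
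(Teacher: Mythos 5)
Your proof is correct and complete: the index shift $m=n-1$ together with the observation that the series is entire (so the splitting and reindexing are legitimate) is exactly the standard argument, and it is the one given in the reference the paper cites for this lemma; the paper itself supplies no proof, treating the identity as a quoted preliminary. Nothing further is needed.
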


\begin{lem}\label{Lemma5}
If $0<\alpha<1$ and $\mu$ be arbitrary complex number, one has (see \cite{Gor}, p. 61)
$$
    \int_0^t \eta^{\mu-1} E_{\alpha,\mu}(-\lambda \eta^\alpha) d\eta=t^\mu E_{\alpha,\mu+1}(-\lambda t^\alpha).
$$
\end{lem}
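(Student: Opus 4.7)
The plan is to prove the identity by term-by-term integration of the Mittag-Leffler series, which is the most direct route and avoids any machinery beyond what is already in the Preliminaries.

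First, I would substitute the defining series
\[
E_{\alpha,\mu}(-\lambda \eta^\alpha) = \sum_{n=0}^\infty \frac{(-\lambda)^n \eta^{\alpha n}}{\Gamma(\alpha n + \mu)}
\]
into the integrand on the left-hand side and justify interchanging the sum and the integral. Since $E_{\alpha,\mu}$ is entire in its argument and the coefficients $|\Gamma(\alpha n+\mu)|^{-1}$ decay superexponentially by Stirling's formula, the series converges uniformly on every compact subinterval of $[0,t]$; combined with the local integrability of $\eta^{\mu-1}$ at the origin (which requires $\mathrm{Re}\,\mu > 0$), a Weierstrass $M$-test argument licenses the exchange.

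Next, I would evaluate each resulting term in closed form,
\[
\int_0^t \eta^{\alpha n + \mu - 1}\, d\eta = \frac{t^{\alpha n + \mu}}{\alpha n + \mu},
\]
and pull a common factor of $t^\mu$ out of the resulting sum. Finally, applying the identity $(\alpha n + \mu)\,\Gamma(\alpha n + \mu) = \Gamma(\alpha n + \mu + 1)$ (or equivalently Lemma \ref{Lemma4} with $\mu$ replaced by $\mu+1$ and the argument rearranged) rewrites the remaining series as $E_{\alpha,\mu+1}(-\lambda t^\alpha)$, which is the claimed right-hand side.

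The only genuinely delicate point is the convergence/interchange step, and even this is routine once the uniform bound from Stirling is in hand, so I do not expect any real obstacle. If one wanted to accommodate $\mu$ with $\mathrm{Re}\,\mu \leq 0$ (as the statement formally permits), the identity would have to be interpreted by analytic continuation in $\mu$, since the integral on the left is otherwise divergent at $\eta=0$; however, for every use of this lemma later in the paper the parameter $\mu$ will be a positive real combination of $\alpha$ and $\beta$, so this technicality can be noted and set aside.
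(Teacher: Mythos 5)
Your proposal is correct and is the standard term-by-term integration argument; the paper itself offers no proof of Lemma \ref{Lemma5}, only a citation to Gorenflo et al., where essentially this computation appears, and you rightly observe that the hypothesis should really read $\mathrm{Re}\,\mu>0$ rather than ``arbitrary complex $\mu$'' for the integral to converge at the origin. One small inaccuracy: the step $(\alpha n+\mu)\Gamma(\alpha n+\mu)=\Gamma(\alpha n+\mu+1)$ is just the Gamma functional equation and is \emph{not} equivalent to Lemma \ref{Lemma4}, which shifts the second parameter by $\alpha$ rather than by $1$; this parenthetical should be dropped, but it does not affect the validity of the main argument.
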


\begin{lem}\label{Lemma6}
If $0<\alpha<1$ and $\mu$ be arbitrary complex number, one has (see \cite{Dzha}, p. 134):
$$
E_{\alpha,\mu}(-z)=-\sum_{k=1}^n \frac{(-z)^{-k}}{\Gamma(\mu-k\alpha)}+O(z^{-n-1}).
$$
\end{lem}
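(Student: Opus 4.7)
The plan is to derive this classical asymptotic expansion from a Hankel-type contour integral representation of $E_{\alpha,\mu}$ combined with a finite geometric-series expansion of the integrand. The key input is the Hankel formula
$$\frac{1}{\Gamma(s)}=\frac{1}{2\pi i}\int_{\mathcal{H}}e^{\zeta}\zeta^{-s}\,d\zeta,$$
valid for all $s\in\mathbb{C}$, where $\mathcal{H}$ is the Hankel contour starting at $-\infty$ below the negative real axis, encircling the origin counter-clockwise, and returning to $-\infty$ above it.

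First I would substitute this formula into the series defining $E_{\alpha,\mu}(z)$, interchange sum and integral (justified by choosing the initial contour so that $|z\zeta^{-\alpha}|<1$ on it, hence the geometric series converges uniformly), and sum to obtain
$$E_{\alpha,\mu}(z)=\frac{1}{2\pi i}\int_{\mathcal{H}}\frac{e^{\zeta}\zeta^{\alpha-\mu}}{\zeta^{\alpha}-z}\,d\zeta.$$
Next, for large $|z|$ I would deform the contour so that $|\zeta^{\alpha}|\le |z|/2$ on it; this deformation is legitimate in the sector $|\arg(-z)|<(1-\alpha/2)\pi$, which covers the case relevant to the statement (namely $z$ replaced by $-z$ with $z>0$ large, where $\zeta^{\alpha}$ on $\mathcal{H}$ lies in the sector $|\arg\zeta^{\alpha}|\le\alpha\pi<\pi$, so no singularity of the integrand is crossed). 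On the deformed contour I would use the finite expansion
$$\frac{1}{\zeta^{\alpha}-z}=-\sum_{k=0}^{n-1}\frac{\zeta^{\alpha k}}{z^{k+1}}-\frac{\zeta^{\alpha n}}{z^{n}(\zeta^{\alpha}-z)}.$$

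Inserting this into the integral and reading the Hankel formula in reverse, the $k$th term produces exactly $-z^{-(k+1)}/\Gamma(\mu-(k+1)\alpha)$. Re-indexing with $j=k+1$ and substituting $z\mapsto -z$ recovers the stated principal part $-\sum_{j=1}^{n}(-z)^{-j}/\Gamma(\mu-j\alpha)$.

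The main obstacle is the remainder estimate. One must show
$$R_n(z)=-\frac{1}{2\pi i\,z^{n}}\int_{\mathcal{H}}\frac{e^{\zeta}\zeta^{\alpha(n+1)-\mu}}{\zeta^{\alpha}-z}\,d\zeta=O(|z|^{-n-1}).$$
The lower bound $|\zeta^{\alpha}-z|\ge|z|/2$ built into the choice of contour yields $|R_n(z)|\le C|z|^{-n-1}\int_{\mathcal{H}}e^{\mathrm{Re}\,\zeta}|\zeta|^{\alpha(n+1)-\mathrm{Re}\,\mu}\,|d\zeta|$; the remaining integral converges because $e^{\zeta}$ decays exponentially along the two rays while the circular portion of the contour has fixed radius, and the resulting bound is uniform in $|z|$ as $|z|\to\infty$ in the required sector. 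This completes the argument.
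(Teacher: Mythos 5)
The paper itself gives no proof of Lemma \ref{Lemma6} --- it is quoted from Dzherbashian's monograph --- and your Hankel-contour derivation is essentially the classical argument found in that source and in Gorenflo--Kilbas--Mainardi: pass to the representation $E_{\alpha,\mu}(z)=\frac{1}{2\pi i}\int_{\mathcal H}e^{\zeta}\zeta^{\alpha-\mu}(\zeta^{\alpha}-z)^{-1}\,d\zeta$, expand $(\zeta^{\alpha}-z)^{-1}$ finitely in powers of $z^{-1}$, and read the Hankel formula backwards to identify the coefficients $1/\Gamma(\mu-k\alpha)$. The overall architecture, the identification of the principal terms after re-indexing, and the observation that no pole of $(\zeta^{\alpha}-z)^{-1}$ is crossed when the Mittag-Leffler argument lies on the negative real axis (since $|\arg\zeta^{\alpha}|\le\alpha\pi<\pi$ on $\mathcal H$) are all correct.

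One step fails as written: you cannot ``deform the contour so that $|\zeta^{\alpha}|\le|z|/2$ on it,'' because the Hankel contour is unbounded and $|\zeta^{\alpha}|\to\infty$ along its two rays; no deformation achieves that inequality globally, so the lower bound $|\zeta^{\alpha}-z|\ge|z|/2$ is not ``built into the choice of contour.'' The bound you need is true, but for a different reason, namely angular separation: for the relevant argument $-x$ with $x>0$ one has $\arg(-x)=\pi$ while $|\arg\zeta^{\alpha}|\le\alpha\pi<\pi$ on the whole (fixed, $z$-independent) contour, and minimizing $|\zeta^{\alpha}+x|^{2}=|\zeta|^{2\alpha}+2|\zeta|^{\alpha}x\cos(\arg\zeta^{\alpha})+x^{2}$ over $|\zeta|$ gives $|\zeta^{\alpha}+x|\ge x\sin(\alpha\pi)$ uniformly. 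With this substitution your remainder estimate goes through verbatim, since the finite expansion $\frac{1}{1-q}=\sum_{k=0}^{n-1}q^{k}+\frac{q^{n}}{1-q}$ is an algebraic identity requiring no smallness of $q$. Two minor points: the remainder term in your expansion of $(\zeta^{\alpha}-z)^{-1}$ should enter with a plus sign, $+\zeta^{\alpha n}/\bigl(z^{n}(\zeta^{\alpha}-z)\bigr)$, which is immaterial for an $O$-bound; and the condition justifying the initial interchange of sum and integral is $|\zeta|>|z|^{1/\alpha}$ on the starting contour, after which one shrinks to a fixed contour using the no-pole observation you already made.
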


\section{ Well-posedness of Problem \eqref{prob}}
First, let's begin by explaining the solution to the problem.

\begin{defin}\label{def1}
A function  $u(t)\in AC([0,T];H)$  is called a solution of problem  \eqref{prob} if $D_t^\beta (A u(t))$, $D_t^\beta(D_t^\alpha u(t)) \in C((0,T];H)$, $D_t^\alpha u(t) \in C([0,T];H)$ and $u(t)$ satisfies all conditions of problem \eqref{prob}.
\end{defin} 

\subsection{The auxiliary problems.}

To solve Problem \eqref{prob}, we divide it into two auxiliary problems:
\begin{equation}\label{aux1}
    \begin{cases}
        D_t^\beta(D^\alpha W(t))+D_t^\beta(AW(t))=0,\quad 0<t\leq T\\
        W(T)=\gamma W(0)+\Phi,    \\
        D_t^\alpha W(+0)=\psi
    \end{cases}
\end{equation}
and 
\begin{equation}\label{aux2}
    \begin{cases}
    D_t^\beta(D^\alpha V (t))+D_t^\beta(AV(t))=f(t),\quad 0<t\leq T\\
    V(0)=0,\\
    D_t^\alpha V(+0)=0,
    \end{cases}
\end{equation} where $\Phi \in H$ is a given function.

Problem \eqref{aux1} is a special case of Problem \eqref{prob}, and the solution of Problem \eqref{aux2} is defined similarly to Definition \ref{def1}.

\begin{lem} Let 
\begin{equation}
 \Phi=\varphi-V(T),  
\end{equation}  $W(t)$ and $V(t)$ be the corresponding solutions of Problems \eqref{aux1} and \eqref{aux2}. Then it is verified that the function $u(t)=W(t)+V(t)$ is a solution of problem \eqref{prob}.
\end{lem}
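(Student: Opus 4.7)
The strategy is to exploit the linearity of the Caputo derivatives $D_t^\alpha$, $D_t^\beta$ and of the operator $A$, and then to verify directly that the ansatz $u(t)=W(t)+V(t)$ fulfils the equation, the non-local condition at $t=T$, and the initial condition on $D_t^\alpha u$. The specific choice $\Phi=\varphi-V(T)$ is what makes the non-local condition balance.

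First, I would confirm that $u$ belongs to the correct class of Definition \ref{def1}. Since both $W$ and $V$ satisfy the regularity $W,V\in AC([0,T];H)$, $D_t^\alpha W, D_t^\alpha V\in C([0,T];H)$, and $D_t^\beta(AW),D_t^\beta(AV),D_t^\beta(D_t^\alpha W),D_t^\beta(D_t^\alpha V)\in C((0,T];H)$, and since these spaces are closed under addition, the same regularity transfers to $u=W+V$. Next, I would check the equation: using linearity,
\begin{equation*}
D_t^\beta(D_t^\alpha u(t))+D_t^\beta(Au(t))=\bigl[D_t^\beta(D_t^\alpha W)+D_t^\beta(AW)\bigr]+\bigl[D_t^\beta(D_t^\alpha V)+D_t^\beta(AV)\bigr]=0+f(t)=f(t).
\end{equation*}
The initial condition at $t=+0$ is handled in the same way: $D_t^\alpha u(+0)=D_t^\alpha W(+0)+D_t^\alpha V(+0)=\psi+0=\psi$.

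The only non-trivial step is the verification of the non-local condition, where the prescribed $\Phi$ is used. From $V(0)=0$ and $W(T)=\gamma W(0)+\Phi$ together with $\Phi=\varphi-V(T)$, one computes
\begin{equation*}
u(T)=W(T)+V(T)=\gamma W(0)+\Phi+V(T)=\gamma\bigl(W(0)+V(0)\bigr)+\bigl(\varphi-V(T)\bigr)+V(T)=\gamma u(0)+\varphi,
\end{equation*}
which is precisely the required condition. I do not foresee any real obstacle here: the argument is a straightforward linearity check, and the role of the lemma is essentially to record that the particular choice $\Phi=\varphi-V(T)$ absorbs the inhomogeneous term $V(T)$ produced by the auxiliary problem \eqref{aux2}, thereby reducing the original problem \eqref{prob} to the pair \eqref{aux1}–\eqref{aux2}.
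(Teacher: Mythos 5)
Your proposal is correct and follows essentially the same route as the paper's own proof: add the two auxiliary systems by linearity, use $V(0)=0$, and observe that the choice $\Phi=\varphi-V(T)$ makes the non-local condition at $t=T$ close up. Your version is in fact slightly more complete, since you also record explicitly that the regularity required by Definition~\ref{def1} is preserved under addition, a point the paper leaves implicit.
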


\begin{proof}
 Let  the functions $W(t)$ and $V(t)$ are solutions of the problems \eqref{aux1} and \eqref{aux2}, respectively, and let $\Phi=\varphi-V(T)$. Then

\begin{equation}\label{lauxeq}
    \begin{cases}
        D_t^\beta(D_t^\alpha W(t))+D_t^\beta(D_t^\alpha V(t))+D_t^\beta(AW(t))+D_t^\beta(AV (t))=f(t), \\
        W(T)=\gamma W(0)+\varphi-V(T), \\
        D_t^\alpha W(+0)+D_t^\alpha V(+0)=\psi.
    \end{cases}
\end{equation} 
Taking into account the equality $\gamma V(0)=0$, problem \eqref{lauxeq} has the following form:
$$
\begin{cases}
D_t^\beta\left(D_t^\alpha(W(t)+V(t))\right)+D_t^\beta\left(A(W(t)+V(t))\right)=f(t),\\
   W(T)+V(T)=\gamma(W(0)+V(0))+\varphi,\\
   D_t^\alpha(W(+0)+V(+0))=\psi.
\end{cases}
$$
If we note $ u = W + V$, then we obtain problem \eqref{prob}.
The lemma has been proved.
\end{proof} 

Applying the superposition technique, the original problem is decomposed into simpler subproblems. This widely used approach (see \cite{AshurovFayziev}, \cite{Sabitov}, etc.) reduces the task to solving auxiliary equations. Problem \eqref{aux2} is a non-homogeneous fractional differential equation subject to homogeneous boundary conditions.
For Problem \eqref{aux2}, we have the following statement.

\begin{thm}\label{theorem1}
Let $ f(t) \in C([0,T];D(A^\epsilon))$ for some $\epsilon \in(0,1) $. Then  the solution to Problem \eqref{aux2} is unique and defined in the following form:
\begin{equation} 
V(t)=\sum_{k=1}^\infty\bigg[\int_0^t (t-\eta)^{\alpha+\beta-1} E_{\alpha,\alpha+\beta}(-\lambda_k (t-\eta)^\alpha)f_k(\eta)  d\eta\bigg] v_k.
\end{equation}
In addition, there is a constant $C_\epsilon$ such that the following coercive-type inequality holds:
\begin{equation} 
||D_t^\beta(D_t^\alpha V(t))||^2+||D_t^\beta(V(t))||_1^2 \leq C_\epsilon \max_{t\in[0,T]}||f||_\epsilon^2,\quad 0<t\leq T.
\end{equation}
\end{thm}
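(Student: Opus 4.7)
The plan is to diagonalize Problem \eqref{aux2} in the orthonormal eigenbasis $\{v_k\}$ of $A$, solve each resulting scalar fractional equation explicitly, justify termwise operations, and finally obtain the coercive estimate via the Mittag-Leffler decay of Lemma \ref{Lemma1}. First, writing $V(t)=\sum_k V_k(t)v_k$ and $f(t)=\sum_k f_k(t)v_k$ and pairing with $v_k$ reduces \eqref{aux2} to the scalar problem
$$D_t^\beta\bigl(D_t^\alpha V_k(t)+\lambda_k V_k(t)\bigr)=f_k(t),\qquad V_k(0)=0,\ D_t^\alpha V_k(+0)=0.$$
Since the bracketed quantity vanishes at $t=0$, applying $I_t^\beta$ yields $D_t^\alpha V_k+\lambda_k V_k=I_t^\beta f_k$, a classical Caputo-$\alpha$ ODE whose solution with $V_k(0)=0$ is
$$V_k(t)=\int_0^t(t-s)^{\alpha-1}E_{\alpha,\alpha}\bigl(-\lambda_k(t-s)^\alpha\bigr)\,I_s^\beta f_k(s)\,ds.$$
Expanding $I_s^\beta f_k$, interchanging the order of integration and substituting $u=s-\eta$, the inner integral reduces via Lemma \ref{Lemma5} (together with an elementary Beta-function calculation) to $\Gamma(\beta)(t-\eta)^{\alpha+\beta-1}E_{\alpha,\alpha+\beta}\bigl(-\lambda_k(t-\eta)^\alpha\bigr)$, giving the claimed representation.

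Next I would verify that the resulting series is a genuine solution. Lemma \ref{Lemma3} with $\mu=\alpha+\beta$, $\gamma=\beta$ gives
$$D_t^\beta\bigl[(t-\eta)^{\alpha+\beta-1}E_{\alpha,\alpha+\beta}(-\lambda_k(t-\eta)^\alpha)\bigr]=(t-\eta)^{\alpha-1}E_{\alpha,\alpha}(-\lambda_k(t-\eta)^\alpha),$$
and a further application of Lemma \ref{Lemma3} combined with the recurrence in Lemma \ref{Lemma4} (which yields $E_{\alpha,0}(z)=zE_{\alpha,\alpha}(z)$) computes $D_t^\alpha D_t^\beta$ on the kernel, licensing termwise differentiation under the sum. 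The scalar equation then gives $D_t^\beta D_t^\alpha V_k=f_k-\lambda_k D_t^\beta V_k$, so controlling both sides of the coercive inequality reduces to estimating $\lambda_k D_t^\beta V_k$.

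For the coercive estimate, Lemma \ref{Lemma1} yields
$$|\lambda_k D_t^\beta V_k(t)|\le C\int_0^t\frac{\lambda_k(t-\eta)^{\alpha-1}}{1+\lambda_k(t-\eta)^\alpha}\,|f_k(\eta)|\,d\eta.$$
The key interpolation, valid for any $\epsilon\in(0,1)$ because $x^{1-\epsilon}/(1+x)\le 1$ for $x\ge 0$, is
$$\frac{\lambda_k s^{\alpha-1}}{1+\lambda_k s^\alpha}=\frac{(\lambda_k s^\alpha)^{1-\epsilon}}{1+\lambda_k s^\alpha}\cdot\lambda_k^{\epsilon}s^{\alpha\epsilon-1}\le\lambda_k^{\epsilon}s^{\alpha\epsilon-1}.$$
Cauchy-Schwarz in $\eta$ against the remaining weight $(t-\eta)^{\alpha\epsilon-1}$, summation over $k$, and Fubini give
$$\sum_k\lambda_k^2|D_t^\beta V_k(t)|^2\le C_\epsilon\int_0^t(t-\eta)^{\alpha\epsilon-1}\|f(\eta)\|_\epsilon^2\,d\eta\le C_\epsilon\max_{[0,T]}\|f\|_\epsilon^2,$$
i.e.\ $\|D_t^\beta V(t)\|_1^2\le C_\epsilon\max\|f\|_\epsilon^2$. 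The bound on $\|D_t^\beta D_t^\alpha V(t)\|$ then follows from the scalar equation together with $\|f(t)\|\le\lambda_1^{-\epsilon}\|f(t)\|_\epsilon$. Uniqueness is immediate: the difference of two solutions reduces mode-by-mode to a homogeneous scalar problem whose coefficients all vanish.

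The main obstacle I anticipate is the coercive estimate itself — specifically, identifying the correct H\"older-type splitting of the Mittag-Leffler kernel that converts the $\lambda_k^2$ weight into $\lambda_k^{2\epsilon}$ while keeping the remaining time-factor $(t-\eta)^{\alpha\epsilon-1}$ integrable. Justifying the commutation of $D_t^\beta$ with the convolution representation (essentially Lemma \ref{Lemma3} under the integral sign) requires care, but follows a familiar pattern for Caputo derivatives acting on Mittag-Leffler convolution kernels.
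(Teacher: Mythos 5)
Your argument is correct and follows exactly the route the paper intends (the paper omits this proof, deferring to \cite{AshurovFayziev} and \cite{FayzievJumaeva}, which use the same eigenfunction expansion, the reduction $D_t^\alpha V_k+\lambda_k V_k=I_t^\beta f_k$, the Mittag-Leffler convolution kernel, and the $\lambda_k^{\epsilon}s^{\alpha\epsilon-1}$ splitting to trade operator powers for an integrable time singularity). Your write-up is in fact a complete and accurate reconstruction of the standard proof the authors reference.
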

This can be proved in the same way as the articles \cite{AshurovFayziev}, \cite{FayzievJumaeva}. 

\subsection{Solution of Problem \eqref{aux1}}

Due to the completeness of the system of eigenfunctions $\{v_k\}$ in $H$, the arbitrary solution can be written in the following form:
\begin{equation}\label{wt}
W(t)=\sum_{k=1}^\infty T_k(t) v_k,
\end{equation}
where  $T_k$ are the Fourier coefficients of the function $W(t)$ and the solutions of the non-local problem:
\begin{equation}\label{eqT}
    \begin{cases}
    D_t^\beta(D_t^\alpha T_k(t))+\lambda_k D_t^\beta T_k(t)=0, \quad 0<t\leq T,\\
    T_k(T)=\gamma T_k(+0)+\Phi_k, \\
    D_t^\alpha T_k(0)=\psi_k,
    \end{cases}
\end{equation} where $\Phi_k$ are the Fourier coefficients of the function $\Phi \in H$.

Let us denote $T_k(0)=b_k$. Then the solution of the differential equation \eqref{eqT} with initial conditions has the following form (see, e.g., \cite{FayzievJumaeva}):
\begin{equation}\label{tkt}
T_k(t)=b_k E_{\alpha,1}(-\lambda_k t^\alpha)+(\psi_k+\lambda_k b_k)t^\alpha E_{\alpha,\alpha+1}(-\lambda_k t^\alpha).
\end{equation}
According to the non-local condition of the differential equation \eqref{eqT}, we obtain the following equation:
\begin{equation}
    b_k E_{\alpha,1}(-\lambda_k T^\alpha)+(\psi_k+\lambda_k b_k)T^\alpha E_{\alpha,\alpha+1}(-\lambda_k T^\alpha)=\gamma b_k+\Phi.
\end{equation}
So, we obtain the equation that defines the unknown numbers $b_k$:
$$
    b_k \Bigg[E_{\alpha,1}(-\lambda_k T^\alpha)+\lambda_k T^\alpha E_{\alpha,\alpha+1}(-\lambda_k T^\alpha)-\gamma\Bigg]=\Phi_k-\psi_k T^\alpha E_{\alpha,\alpha+1}(-\lambda_k T^\alpha) .
$$
Now, by Lemma \ref{Lemma4}, we can rewrite the function $E_{\alpha,1}(-\lambda_k T^\alpha)$ as 
\begin{equation}\label{mittagrec}
E_{\alpha,1}(-\lambda_k T^\alpha)=\frac{1}{\Gamma(1)}-\lambda_k T^\alpha E_{\alpha,\alpha+1}(-\lambda_k T^\alpha).
\end{equation}
So, from this equality, we can rewrite $b_k$ in the following form:
\begin{equation}\label{bk1}
    b_k(1-\gamma)=\Phi_k-\psi_k T^\alpha E_{\alpha,\alpha+1}(-\lambda_k T^\alpha).
\end{equation}
Then, taking into account that $\gamma\neq 1$ we have the following formal solution for Problem \eqref{aux1}:
$$
W(t)=\sum_{k=1}^\infty\bigg [\frac{\Phi_k-\psi_k T^\alpha E_{\alpha,\alpha+1}(-\lambda_k T^\alpha)}{1-\gamma} E_{\alpha,1}(-\lambda_k t^\alpha)+$$$$ +\left(\psi_k+\lambda_k\frac{\Phi_k-\psi_k T^\alpha E_{\alpha,\alpha+1}(-\lambda_k T^\alpha)}{1-\gamma} \right)t^\alpha E_{\alpha,\alpha+1}(-\lambda_k t^\alpha)\bigg] v_k.
$$
By simplifying, we obtain
$$ W(t)=\sum_{k=1}^\infty\bigg[\frac{\Phi_k-\psi_k T^\alpha E_{\alpha,\alpha+1}(-\lambda_k T^\alpha)}{1-\gamma} \big(E_{\alpha,1}(-\lambda_k t^\alpha)+\lambda_k t^\alpha E_{\alpha,\alpha+1}(-\lambda_k t^\alpha)\big)+$$$$+\psi_k t^\alpha E_{\alpha,\alpha+1}(-\lambda_k t^\alpha)\bigg]v_k.$$ 

By using the equality \eqref{mittagrec}, we have the formal solution to Problem \eqref{aux1}, as
\begin{equation}\label{soluaux2}
    W(t)=\sum_{k=1}^\infty \bigg[\frac{\Phi_k-\psi_k T^\alpha E_{\alpha,\alpha+1}(-\lambda_k T^\alpha)}{1-\gamma}+\psi_k t^\alpha E_{\alpha,\alpha+1}(-\lambda_k t^\alpha)\bigg] v_k.
\end{equation}

Thus, we have obtained the result for Problem \eqref{aux1}.

\begin{thm}\label{theorem2}
Let $\psi\in H$ and $\Phi\in D(A)$. If $\gamma\neq 1$, then  Problem \eqref{aux1} has a unique solution, and this solution is defined in the form \eqref{soluaux2}. 
\noindent 
There is a constant $C$ such that the following coercive-type inequality holds:
$$
||D_t^\beta(D_t^\alpha W(t))||^2+||D_t^\beta W(t)||^2_1\leq Ct^{-2\beta}||\psi||^2+C||\Phi||_1^2, \quad 0<t\leq T.
$$
\end{thm}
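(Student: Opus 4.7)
The plan is to treat \eqref{soluaux2} as the Fourier expansion in the eigenbasis $\{v_k\}$ and verify everything coefficient by coefficient. I would split $W(t)=W_\ast+W_\psi(t)$, where $W_\ast=\sum_k A_k v_k$ with $A_k=(\Phi_k-\psi_k T^\alpha E_{\alpha,\alpha+1}(-\lambda_k T^\alpha))/(1-\gamma)$ is time-independent and $W_\psi(t)=\sum_k\psi_k t^\alpha E_{\alpha,\alpha+1}(-\lambda_k t^\alpha)\,v_k$ carries the $t$-dependence. Lemma \ref{Lemma1} gives $|A_k|\le C(|\Phi_k|+|\psi_k|)$ and $\lambda_k t^\alpha\,|E_{\alpha,\alpha+1}(-\lambda_k t^\alpha)|\le C$, so under the hypothesis $\Phi\in D(A)$ the series converges in $H$ and even in $D(A)$ for every $t\in[0,T]$.

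Next I would differentiate termwise. The Caputo operator kills the constant part $W_\ast$, and Lemma \ref{Lemma3} with $\mu=\alpha+1$ gives $D_t^\alpha(t^\alpha E_{\alpha,\alpha+1}(-\lambda_k t^\alpha))=E_{\alpha,1}(-\lambda_k t^\alpha)$, so
\[
D_t^\alpha W(t)=\sum_{k=1}^\infty\psi_k E_{\alpha,1}(-\lambda_k t^\alpha)\,v_k,
\]
and $D_t^\alpha W(+0)=\psi$ because $E_{\alpha,1}(0)=1$. Rewriting $E_{\alpha,1}(-\lambda_k t^\alpha)=1-\lambda_k t^\alpha E_{\alpha,\alpha+1}(-\lambda_k t^\alpha)$ via Lemma \ref{Lemma4} and then applying $D_t^\beta$ (which annihilates the constant $1$) through Lemma \ref{Lemma3} yields
\[
D_t^\beta(D_t^\alpha W(t))=-\sum_{k=1}^\infty \lambda_k\psi_k\, t^{\alpha-\beta}E_{\alpha,\alpha+1-\beta}(-\lambda_k t^\alpha)\,v_k,
\]
while a parallel calculation on $AW(t)$ produces the same series with the opposite sign, so the equation in \eqref{aux1} is satisfied. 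The non-local condition $W(T)=\gamma W(0)+\Phi$ is built in by the construction of the coefficients $b_k$ from \eqref{bk1}, whose denominator $1-\gamma$ is nonzero by hypothesis.

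For the coercive estimate, the key termwise bound, again from Lemma \ref{Lemma1}, is
\[
\lambda_k\, t^{\alpha-\beta}|E_{\alpha,\alpha+1-\beta}(-\lambda_k t^\alpha)|\le C\,\frac{\lambda_k t^{\alpha-\beta}}{1+\lambda_k t^\alpha}\le Ct^{-\beta}.
\]
Squaring and summing in $k$, both $\|D_t^\beta(D_t^\alpha W(t))\|^2$ and $\|D_t^\beta W(t)\|_1^2$ are majorized by $Ct^{-2\beta}\|\psi\|^2$, and the extra $C\|\Phi\|_1^2$ appearing in the statement is then trivially absorbed. Uniqueness follows by Parseval: any solution has Fourier coefficients $T_k(t)=(W(t),v_k)$ satisfying the scalar problem \eqref{eqT}, whose solution takes the form \eqref{tkt} and whose $b_k$ is uniquely determined by \eqref{bk1} under $\gamma\ne 1$. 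The step I expect to be the main obstacle is the rigorous justification of interchanging the nonlocal operator $D_t^\beta=I_t^{1-\beta}\,d/dt$ with the infinite sum; I would resolve this by a dominated-convergence argument on each compact subinterval $[\delta,T]\subset(0,T]$, with Lemma \ref{Lemma1} furnishing the $k$-uniform majorants.
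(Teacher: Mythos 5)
Your proposal is correct and follows essentially the same route as the paper: termwise differentiation of the Fourier series using Lemmas \ref{Lemma1}, \ref{Lemma3} and \ref{Lemma4}, the bound $\lambda_k t^{\alpha-\beta}|E_{\alpha,\alpha+1-\beta}(-\lambda_k t^{\alpha})|\le Ct^{-\beta}$ for the coercive estimate, justification of the interchange via uniformly convergent partial sums, and uniqueness from the scalar relation $b_k=\gamma b_k$ with $\gamma\neq 1$. Your explicit verification that $D_t^{\beta}(D_t^{\alpha}W)$ and $D_t^{\beta}(AW)$ cancel is only a slightly more direct phrasing of the paper's observation that the equation forces $D_t^{\beta}(D_t^{\alpha}W)=-D_t^{\beta}(AW)$.
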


\begin{proof}\textbf{Existence.}
We need to prove that function \eqref{soluaux2} satisfies all conditions of Definition 1 for the case $\gamma \neq 1$.  Let $S_j(t)$ be the partial sum of the series \eqref{soluaux2}:
$$
S_j(t)=\sum_{k=1}^j\bigg[\frac{\Phi_k-\psi_kT^\alpha E_{\alpha,\alpha+1}(-\lambda_k T^\alpha)}{1-\gamma}+\psi_kt^\alpha E_{\alpha,\alpha+1}(-\lambda_k t^\alpha)\bigg]v_k.
$$
First, we show that $AW(t)\in C((0,T],H)$. Then, by applying the operator $A$ on the partial sum $S_j(t)$, we have 
$$AS_j(t)=\sum_{k=1}^j \lambda_k \left[ \frac{\Phi_k}{1-\gamma}+\psi_k\left(t^\alpha E_{\alpha,\alpha+1}(-\lambda_k t^\alpha)-\frac{T^\alpha E_{\alpha,\alpha+1}(-\lambda_k T^\alpha)}{1-\gamma}\right)\right]v_k.$$
Applying Parseval's identity and the inequality $(a+b)^2\leq 2a^2+2b^2$, we get
$$
||AS_j(t)||^2\leq \sum_{k=1}^j\lambda_k^2 \left|\frac{\Phi_k}{1-\gamma}\right|^2+\sum_{k=1}^j\lambda_k^2 \left|\psi_k\left(t^\alpha E_{\alpha,\alpha+1}(-\lambda_k t^\alpha)-\frac{T^\alpha E_{\alpha,\alpha+1}(-\lambda_k T^\alpha)}{1-\gamma}\right)\right|^2$$$$=AS_j^1+AS_j^2,
$$ 
where
$$AS_j^1=\sum_{k=1}^j\lambda_k^2 \left|\frac{\Phi_k}{1-\gamma}\right|^2,$$$$AS_j^2=\sum_{k=1}^j\lambda_k^2 \left|\psi_k\left(t^\alpha E_{\alpha,\alpha+1}(-\lambda_k t^\alpha)-\frac{T^\alpha E_{\alpha,\alpha+1}(-\lambda_k T^\alpha)}{1-\gamma}\right)\right|^2. $$
If $\gamma \neq 1$, then the following estimate  holds true:
$$ AS_j^1\leq C\sum_{k=1}^j\lambda_k^2 |\Phi_k|^2.$$
According to Lemma \ref{Lemma2}, we obtain the estimate for $AS_j^2$ :$$AS_j^2\leq C\sum_{k=1}^j |\psi_k|^2.$$ Therefore, if $\Phi\in D(A)$ and $\psi \in H$, then $AW(t) \in C([0,T],H)$.

In the above reasoning, we can prove uniform convergence of the Fourier series corresponding to the function $W(t)$. So, we can conclude that if  $\Phi \in H$ and $\psi \in H$, then $W(t) \in AC([0,T]; H)$.

Now we verify $D_t^\beta(AW(t)) \in C((0,T];H)$. By applying Parseval's identity, we have 
$$
||D_t^\beta(AS_j(t))||^2= \sum_{k=1}^j\lambda_k^2\left|D_t^\beta\bigg[\frac{\Phi_k}{1-\gamma}-\psi_k\left(t^\alpha E_{\alpha,\alpha+1}(-\lambda_k t^\alpha)-\frac{T^\alpha E_{\alpha,\alpha+1}(-\lambda_k T^\alpha)}{1-\gamma}\right)\bigg]\right|^2.
$$
Since $\Phi_k$ does not depend on $t$, we apply the operator $D_t^\beta$ term by term to $AS_j(t)$. By using Lemma \ref{Lemma3}, we have the following equality:
$$ 
||D_t^\beta(AS_j(t))||^2=\sum_{k=1}^j \bigg|\psi_k \lambda_k t^{\alpha-\beta} E_{\alpha,\alpha-\beta+1}(-\lambda_k t^\alpha)\bigg|^2.
$$
Due to Lemma \ref{Lemma1}, we have the following estimate:
 $$
 ||D_t^\beta(AS_j(t))||^2\leq C \sum_{k=1}^j t^{-2\beta}|\psi_k|^2.
 $$
Thus, if $\psi \in H$, then $D_t^\beta(AW(t)) \in C((0,T),H)$ is valid.
The equation of Problem \eqref{aux1} can be written in the form $D_t^\beta(D_t^\alpha W(t))=-D_t^\beta(AW(t))$. Therefore, from the above reasoning, we have $D_t^\beta(D_t^\alpha W(t))\in C((0,T];H)$.

For proving $D_t^\alpha(W(t))\in C([0,T],H)$, we apply the operator $D_t^\alpha$ to the formal series $S_j(t)$. Due to Parseval's identity, we obtain the following: 
$$||D_t^\alpha(S_j(t))||^2=\sum_{k=1}^j \bigg|D_t^\alpha\bigg[\frac{\Phi_k}{1-\gamma}-\psi_k\left(t^\alpha E_{\alpha,\alpha+1}(-\lambda_k t^\alpha)-\frac{T^\alpha E_{\alpha,\alpha+1}(-\lambda_k T^\alpha)}{1-\gamma}\right)\bigg]\bigg|^2.$$ 
Using the same way that we used for estimating $D_t^\beta$, Lemmas \ref{Lemma1} and \ref{Lemma3}, we have:
$$
||D_t^\alpha(S_j(t)||^2=\sum_{k=1}^j \bigg|\psi_k E_\alpha(-\lambda_k t^\alpha)\bigg|^2\leq C\sum_{k=1}^j|\psi_k|^2.
$$
Hence, if $\psi \in H$, then $D_t^\alpha W(t) \in C([0,T];H)$.

It is easy to verify that the derivative of the solution to Problem \eqref{aux1} is a continuous function. Therefore, this solution is an absolutely continuous function. So, according to Definition \ref{def1}, $W(t)$ is a solution to  Problem \eqref{aux1}.

\textbf{Uniqueness.} We use the standard method for proving the uniqueness of this solution. Assume that there are two different solutions to  Problem \eqref{aux1}: $W_1(t)$ and $W_2(t)$. Then, the function $W(t)=W_1(t)-W_2(t)$ will be the solution to the following problem:
\begin{equation}
    \begin{cases}
        D_t^\beta(D_t^\alpha W(t))+D_t^\beta(AW(t))=0, \quad 0<t\leq T \\
        W(T)=\gamma W(0), \\
        D_t^\alpha W(0)=0.
    \end{cases}
\end{equation}
Let $W_k(t)=(W(t),v_k)$. According to the completeness of the set of  eigenfunctions $\{v_k\}$ in $H$,we have the following non-local problem for $W_k(t)$:
\begin{equation}\label{uniqeq}
    \begin{cases}
        D_t^\beta(D_t^\alpha W_k(t))+\lambda_k D_t^\beta(W_k(t))=0, \\
        W_k(T)=\gamma W_k(0), \\
        D_t^\alpha W_k(0)=0.
    \end{cases}
\end{equation}
Let's denote $W_k(0)=b_k$. Then the unique solution to the problem \eqref{uniqeq} has the form (see \cite{FayzievJumaeva})
$$
W_k(t)=b_k E_{\alpha,1}(-\lambda_k t^\alpha)+\lambda_k b_k t^\alpha E_{\alpha,\alpha+1}(-\lambda_k t^\alpha).
$$
From the non-local condition of the problem \eqref{uniqeq} and the recurrence relation for the Mittag-Leffler function, we obtain the following equation to find the unknown $b_k$:
\begin{equation}\label{bk}
    b_k=\gamma b_k.
\end{equation}
If $\gamma \neq 1$, then all $b_k$ are equal to zero and so $W_k(t)=0$ for all $k, k\in \mathbb{N}$. As stated by the completeness of the set of eigenfunctions $\{v_k\}$, we can deduce that $W(t) \equiv 0$. Thus,  Problem \eqref{aux1} in this case has a unique solution.
\end{proof}

Now suppose that $\gamma=1$. Then, any arbitrary value $b_k$ satisfies the equation \eqref{bk}. So, for this case, there is no uniqueness of the solution to Problem \eqref{aux1}.

\subsection{Solution to main problem}

As mentioned above, if we put $\Phi=\varphi-V(T) \in H$ and $W(t)$ and $V(t)$ in the corresponding solutions to Problems \eqref{aux1} and \eqref{aux2}, then $u(t)=W(t)+V(t)$ is a solution to problem \eqref{prob}. Thus, when $\gamma\neq1$ the solution of the problem \eqref{prob}  has the following form
\begin{equation}\label{solu}
u(t)=\sum_{k=1}^\infty\bigg[\frac{\varphi_k-V_k(T)-\psi_k T^\alpha E_{\alpha,\alpha+1}(-\lambda_k T^\alpha)}{1-\gamma}+\psi_k t^\alpha E_{\alpha,\alpha+1}(-\lambda_k t^\alpha)+ V_k(t)\bigg] v_k
\end{equation} where
$$
V_k(t)=\int_0^t  (t-\eta)^{\alpha+\beta-1} E_{\alpha,\alpha+\beta}(-\lambda_k (t-\eta)^\alpha)f_k(\eta)  d\eta.
$$
The uniqueness of the solution $u(t)$ follows from the uniqueness of the solutions $W(t)$ and $V(t)$.
Finally, we have had the result for the Main Problem \eqref{prob}.

\begin{thm}\label{Theorem3}
Let $\varphi \in D(A)$, $\psi \in H$ and $f(t)\in C([0,T];D(A^{\epsilon+1})$ for some $\epsilon \in (0,1)$. If $\gamma\neq1$, then problem \eqref{prob} has a unique solution, and this solution has the form \eqref{solu}.
\noindent
Moreover, there are constants $ C_\epsilon, C $ such that  the following coercive type inequality holds:
$$
\|D_t^{\beta}(D_t^\alpha u(t))\|^2 + \|D_t^\beta u(t)\|_{1}^{2} \leq t^{-2\rho} \left( C_\epsilon \max_{t \in [0,T]} \|f\|_{\epsilon+1}^{2} + C \|\psi\|^{2} \right).
$$
\end{thm}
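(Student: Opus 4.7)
The plan is to exploit the superposition $u(t) = W(t) + V(t)$ already justified in the lemma of Subsection 3.1, so that Theorem \ref{Theorem3} becomes a packaging of Theorems \ref{theorem1} and \ref{theorem2}. Because $f \in C([0,T]; D(A^{\epsilon+1})) \subset C([0,T]; D(A^\epsilon))$, Theorem \ref{theorem1} immediately delivers a unique $V(t)$ solving \eqref{aux2} and the corresponding coercive bound on $\|D_t^\beta(D_t^\alpha V)\|^2 + \|D_t^\beta V\|_1^2$ in terms of $\max_{t \in [0,T]}\|f\|_\epsilon^2$.

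The crucial intermediate step is to verify that $\Phi := \varphi - V(T)$ lies in $D(A)$, so that Theorem \ref{theorem2} applies to the corresponding $W(t)$. Since $\varphi \in D(A)$ by hypothesis, I need only estimate $\|AV(T)\|$. Applying $A$ term-by-term to the series formula for $V(T)$ and invoking Parseval's identity yields
$$
\|AV(T)\|^2 = \sum_{k=1}^\infty \lambda_k^2 \left| \int_0^T (T-\eta)^{\alpha+\beta-1} E_{\alpha,\alpha+\beta}(-\lambda_k(T-\eta)^\alpha) f_k(\eta)\, d\eta \right|^2.
$$
I would then use Lemma \ref{Lemma1} in the form $|E_{\alpha,\alpha+\beta}(-\lambda_k s^\alpha)| \leq C\lambda_k^{-\epsilon} s^{-\alpha\epsilon}$, factor out $\lambda_k^{2(\epsilon+1)}|f_k(\eta)|^2$ so that the $k$-sum is controlled by $\|f(\eta)\|_{\epsilon+1}^2$, and apply Cauchy--Schwarz in $\eta$; integrability of $(T-\eta)^{\alpha+\beta-1-\alpha\epsilon}$ for $\epsilon \in (0,1)$ then gives
$$
\|AV(T)\|^2 \leq C_\epsilon \max_{t \in [0,T]} \|f(t)\|_{\epsilon+1}^2,
$$
which is exactly why the statement strengthens the hypothesis from $D(A^\epsilon)$ to $D(A^{\epsilon+1})$.

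With $\Phi \in D(A)$ established, Theorem \ref{theorem2} supplies the unique $W(t)$ together with the bound $\|D_t^\beta(D_t^\alpha W)\|^2 + \|D_t^\beta W\|_1^2 \leq Ct^{-2\beta}\|\psi\|^2 + C\|\Phi\|_1^2$. The superposition lemma certifies that $u = W + V$ solves \eqref{prob}, and formula \eqref{solu} follows by summing the explicit expressions. Uniqueness is inherited: any two solutions $u_1, u_2$ produce a difference whose Fourier coefficients satisfy the homogeneous scalar problem \eqref{uniqeq}, which forces them to vanish when $\gamma \neq 1$. For the coercive estimate I would apply $(a+b)^2 \leq 2a^2 + 2b^2$ to $D_t^\beta(D_t^\alpha u) = D_t^\beta(D_t^\alpha W) + D_t^\beta(D_t^\alpha V)$ and to $D_t^\beta u$, substitute $\|\Phi\|_1^2 \leq 2\|\varphi\|_1^2 + 2\|AV(T)\|^2$ together with the bound above, and absorb the $t$-independent contributions into the singular factor $t^{-2\rho}$ with $\rho = \beta$, using $t \leq T$.

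The main obstacle is the quantitative control of $\|AV(T)\|$: an extra power of $\lambda_k$ must be compensated by the algebraic decay of the Mittag-Leffler function furnished by Lemma \ref{Lemma1}, and the amount of $\lambda_k^{-1}$ extracted must be calibrated so that the residual time singularity $(T-\eta)^{-\alpha\epsilon}$ remains integrable on $[0,T]$. Once this step is in place, all remaining ingredients are routine applications of the previous theorems and the triangle inequality.
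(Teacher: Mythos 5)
Your proposal follows the paper's route exactly: the paper's entire proof of Theorem \ref{Theorem3} is the single remark that it ``is proven based on the theorems obtained for problems \eqref{aux1} and \eqref{aux2}'', and you supply precisely the missing glue --- most importantly the verification that $\Phi=\varphi-V(T)\in D(A)$ via the interpolated bound $|E_{\alpha,\alpha+\beta}(-\lambda_k s^{\alpha})|\le C\lambda_k^{-\epsilon}s^{-\alpha\epsilon}$, which is indeed the reason the hypothesis on $f$ is upgraded from $D(A^{\epsilon})$ to $D(A^{\epsilon+1})$. One small mismatch in the final step: substituting $\|\Phi\|_1^2\le 2\|\varphi\|_1^2+2\|AV(T)\|_{}^2$ into the stated bound of Theorem \ref{theorem2} leaves a residual $C\|\varphi\|_1^2$ on the right-hand side, which does not occur in the inequality you are asked to prove. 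To remove it, observe (as the computation inside the proof of Theorem \ref{theorem2} actually shows) that $\Phi$ enters $W$ only through a $t$-independent term, which is annihilated by $D_t^{\beta}$; hence the summand $C\|\Phi\|_1^2$ in Theorem \ref{theorem2} is superfluous and the $W$-contribution to the left-hand side is controlled by $Ct^{-2\beta}\|\psi\|^2$ alone, after which your bookkeeping with $\rho=\beta$ gives exactly the claimed estimate.
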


This theorem is proven based on the theorems obtained for problems \eqref{aux1} and \eqref{aux2}. If $f$ does not depend on $t$, the assertion of Theorem \ref{Theorem3} remains valid for all $f \in H$.

\textbf{Corollary 1.} \textit{Let $\varphi \in D(A)$, $\psi \in H$ and $f\in H$. If $\gamma\neq 1$, then Problem \eqref{prob} has a unique solution, and this solution has the representation 
$$
u(t)=\sum_{k=1}^\infty\bigg[\frac{\varphi_k-f_k T^{\alpha+\beta}E_{\alpha,\alpha+\beta+1}(-\lambda_k T^\alpha)-\psi_kT^\alpha E_{\alpha,\alpha+1}(-\lambda_k T^\alpha)}{1-\gamma}$$ 
\begin{equation}\label{corol1}
+\psi_k t^\alpha E_{\alpha,\alpha+1}(-\lambda_k t^\alpha) +f_k t^{\alpha+\beta}E_{\alpha,\alpha+\beta+1}(-\lambda_k t^\alpha)\bigg]v_k.
\end{equation}}

\begin{proof} Since $f$ does not depend on $t$, then by using Lemma \ref{Lemma5}, we have the following form for the Fourier coefficients of $V(t)$:
$$V_k(t)=f_k t^{\alpha+\beta} E_{\alpha,\alpha+\beta+1}(-\lambda_k t^\alpha).$$ 
Let $S_j(t)$ be the partial sum of the series \eqref{corol1}. Then, by Lemma \ref{Lemma1}, we obtain 
$$
||AS_j||^2\leq C_1\sum_{k=1}^j \lambda_k^2 |\varphi_k|^2 +C_2\sum_{k=1}^j |\psi_k|^2+C_3\sum_{k=1}^j |f_k|^2.
$$
By making use of this estimate and also following reasoning analogous to that in the proof of Theorem \ref{Theorem3}, it can be easily verified that \eqref{corol1} solves Problem \eqref{prob} under conditions of Corollary 1.
\end{proof}

\section{Inverse problem}

We now turn our attention to the inverse problems. In this context, we assume that the unknown function $f \in H$ is independent of $t$. To obtain a solution, we impose the following additional condition:
\begin{equation}\label{conad}
    u(t_0)=\omega, \quad 0<t_0<T,
\end{equation} where $\omega \in H$ is a given element.

\begin{defin}\label{def2}
A pair $\{u(t), f\}$ of functions, where $u(t) \in AC([0,T]; H)$ and $f \in H$, such that $D_t^\beta (A u(t))$, $D_t^\beta(D_t^\alpha u(t)) \in C((0,T]; H)$ and $D_t^\alpha u(t) \in C([0,T]; H)$, and which satisfies the conditions of Problem \eqref{prob} and \eqref{conad}, is called a solution of the inverse problem.
\end{defin}

Based on  \eqref{solu}  and Lemma \ref{Lemma5}, the solution to the inverse problem, in the case $\gamma \neq 1$, can be represented in the following form:
$$
    u(t)=\sum_{k=1}^\infty \bigg[\frac{\varphi_k-f_k T^{\alpha+\beta}E_{\alpha,\alpha+\beta+1}(-\lambda_k T^\alpha)-\psi_k T^\alpha E_{\alpha,\alpha+1}(-\lambda_k T^\alpha)}{1-\gamma}
$$
\begin{equation}
+\psi_k t^\alpha E_{\alpha,\alpha+1}(-\lambda_k t^\alpha)+f_k t^{\alpha+\beta}E_{\alpha,\alpha+\beta+1}(-\lambda_k t^\alpha)\bigg]v_k.
\end{equation}
Therefore, due to the additional condition \eqref{conad}  and the completeness of the system $\{v_k\}$, we obtain
$$
\frac{\varphi_k}{1-\gamma}+\psi_k\bigg(t_0^\alpha E_{\alpha,\alpha+1}(-\lambda_k t_o^\alpha)-\frac{T^\alpha E_{\alpha,\alpha+1}(-\lambda_k T^\alpha)}{1-\gamma}\bigg)$$
$$+f_k \bigg(t_0^{\alpha+\beta} E_{\alpha,\alpha+\beta+1}(-\lambda_k t_0^\alpha)-\frac{T^{\alpha+\beta}E_{\alpha,\alpha+\beta+1}(-\lambda_k T^\alpha)}{1-\gamma}\bigg)=\omega_k,
$$
where $\omega_k$ are the Fourier coefficients of the given element $\omega$.

A simple calculation yields:
$$
f_k \bigg[(1-\gamma)t_0^{\alpha+\beta}E_{\alpha,\alpha+\beta+1}(-\lambda_k t_0^\alpha)-T^{\alpha+\beta}E_{\alpha,\alpha+\beta+1}(-\lambda_k T^\alpha)\bigg]$$
\begin{equation}\label{fksol}
=(1-\gamma)\omega_k-\varphi_k +\psi_k \bigg(T^\alpha E_{\alpha,\alpha+1}(-\lambda_k T^\alpha)-(1-\gamma)t_0^\alpha E_{\alpha,\alpha+1}(-\lambda_k t_0^\alpha\bigg) .
\end{equation}
Hereafter, we will use the notation
$$
\Delta_k(\gamma,t_0,T)=(1-\gamma)t_0^{\alpha+\beta}E_{\alpha,\alpha+\beta+1}(-\lambda_k t_0^\alpha)-T^{\alpha+\beta}E_{\alpha,\alpha+\beta+1}(-\lambda_k T^\alpha), \quad k\geq 1.$$

If $\Delta_k(\gamma,t_0, T)\neq 0$ for all $k$, then the unknown coefficients $f_k$ are found uniquely, and the inverse problem also has a unique solution. 

If $\Delta_k(\gamma,t_0, T)=0$ for some $k_0$, then the inverse problem will not have a unique solution. Now, for set $\Omega\subset \mathbb{R}^3$, we denote 
$$K_0:=K_0(\Omega)=\{k:\Delta_k(\gamma,t_0, T)= 0,\quad k\in\mathbb{N}, \quad (\gamma,t_0,T)\in\Omega\}.$$

\begin{lem}\label{Lemma4.1} 
If $t_0<T$, then there exists a number $P\neq0$ such that for large $k$ the following equality holds:
$$
\frac{t_0^{\alpha+\beta} E_{\alpha,\alpha+\beta+1}(-\lambda_k t_0^\alpha)}{T^{\alpha+\beta}E_{\alpha,\alpha+\beta+1}(-\lambda_k T^\alpha)}=\bigg(\frac{t_0}{T}\bigg)^\beta\Bigg[1+\frac{P}{\lambda_k}+O\left(\frac{1}{\lambda_k^2}\right)\Bigg].
$$
\end{lem}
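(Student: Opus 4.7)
The plan is to extract the asymptotics of the numerator and denominator separately from Lemma~\ref{Lemma6}, form the ratio, and expand the denominator geometrically up to order $\lambda_k^{-2}$.

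First, I would apply Lemma~\ref{Lemma6} with $\mu=\alpha+\beta+1$ and $n=2$ to the argument $z=\lambda_k t^\alpha$ (for $t$ equal to either $t_0$ or $T$), obtaining
$$
E_{\alpha,\alpha+\beta+1}(-\lambda_k t^\alpha)
=\frac{1}{\lambda_k t^\alpha \,\Gamma(\beta+1)}
-\frac{1}{\lambda_k^{2}t^{2\alpha}\,\Gamma(\beta+1-\alpha)}+O(\lambda_k^{-3}).
$$
Multiplying by $t^{\alpha+\beta}$ and factoring out the leading term yields
$$
t^{\alpha+\beta}E_{\alpha,\alpha+\beta+1}(-\lambda_k t^\alpha)
=\frac{t^{\beta}}{\lambda_k\Gamma(\beta+1)}\left[1-\frac{Q}{t^{\alpha}\lambda_k}+O(\lambda_k^{-2})\right],
\quad Q:=\frac{\Gamma(\beta+1)}{\Gamma(\beta+1-\alpha)}.
$$
Here $Q$ is a well-defined finite constant because $\beta+1-\alpha\in(0,2)$ avoids the poles of $\Gamma$. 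Both parameters $\alpha,\beta\in(0,1)$ are assumed, so $Q$ does not depend on $k$ and, in particular, $1/\Gamma(\beta+1-\alpha)\neq 0$.

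Next, I would divide the two expansions. The prefactors combine to $(t_0/T)^\beta$, while the bracketed factors give
$$
\frac{1-\dfrac{Q}{t_0^{\alpha}\lambda_k}+O(\lambda_k^{-2})}{1-\dfrac{Q}{T^{\alpha}\lambda_k}+O(\lambda_k^{-2})}
=1+Q\left(\frac{1}{T^{\alpha}}-\frac{1}{t_0^{\alpha}}\right)\frac{1}{\lambda_k}+O(\lambda_k^{-2}),
$$
using $(1-x)^{-1}=1+x+O(x^2)$ with $x=O(\lambda_k^{-1})$. Combining these gives the claimed identity with
$$
P=\frac{\Gamma(\beta+1)}{\Gamma(\beta+1-\alpha)}\left(\frac{1}{T^{\alpha}}-\frac{1}{t_0^{\alpha}}\right).
$$

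Finally, I would verify $P\neq 0$: the Gamma ratio is nonzero as noted, and because $0<t_0<T$ we have $T^{-\alpha}<t_0^{-\alpha}$, so the parenthetical factor is strictly negative. The main technical point to be careful with is tracking the $O(\lambda_k^{-2})$ remainder through the geometric expansion so that it is genuinely absorbed into the final error term; everything else is a direct application of Lemma~\ref{Lemma6}.
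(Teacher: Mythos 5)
Your proof is correct and follows essentially the same route as the paper: both apply Lemma~\ref{Lemma6} with $n=2$ to numerator and denominator, factor out the leading term $\lambda_k^{-1}t^{\beta}/\Gamma(\beta+1)$, and expand the resulting ratio to first order in $\lambda_k^{-1}$. Your value $P=\frac{\Gamma(\beta+1)}{\Gamma(\beta+1-\alpha)}\left(T^{-\alpha}-t_0^{-\alpha}\right)$ is in fact the more carefully sign-tracked version of the paper's constant (the paper's displayed $P$ has the Gamma ratio inverted relative to its own $C_1/C_2$ and an opposite sign), and your explicit check that the Gamma ratio is nonzero and that $t_0<T$ forces $P\neq 0$ is exactly what the lemma requires.
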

\begin{proof} By Lemma \ref{Lemma6} (with $n=2$) for large $k$, we have 
$$
\frac{t_0^{\alpha+\beta} E_{\alpha,\alpha+\beta+1}(-\lambda_k t_0^\alpha)}{T^{\alpha+\beta}E_{\alpha,\alpha+\beta+1}(-\lambda_k T^\alpha)}=\frac{t_0^{\alpha+\beta}\bigg[\frac{(-\lambda_k t_0^\alpha)^{-1}}{\Gamma(\beta+1)}+\frac{(-\lambda_k t_0^\alpha)^{-2}}{\Gamma(\beta-\alpha+1)}+O((\lambda_k t_0^\alpha)^{-3}\bigg]}{T^{\alpha+\beta}\bigg[\frac{(-\lambda_k T^\alpha)^{-1}}{\Gamma(\beta+1)}+\frac{(-\lambda_k T^\alpha)^{-2}}{\Gamma(\beta-\alpha+1)}+O((\lambda_k T^\alpha)^{-3}\bigg]}
$$
$$
=\frac{t_0^\beta\bigg[\frac{1}{\Gamma(\beta+1)}+\frac{(-\lambda_k t_0^\alpha)^{-1}}{\Gamma(\beta-\alpha+1)}+O((\lambda_k t_0^\alpha)^{-2})\bigg]}{T^\beta\bigg[\frac{1}{\Gamma(\beta+1)}+\frac{(-\lambda_k T^\alpha)^{-1}}{\Gamma(\beta-\alpha+1)}+O((\lambda_k T^\alpha)^{-2})\bigg]}
$$
$$
=\bigg(\frac{t_o}{T}\bigg)^\beta\Bigg[1+\frac{\frac{(-\lambda_k t_0^\alpha)^{-1}}{\Gamma(\beta-\alpha+1)}-\frac{(-\lambda_k T^\alpha)^{-1}}{\Gamma(\beta-\alpha+1)}+O((\lambda_k t_0^\alpha)^{-2})-O((\lambda_k T^\alpha)^{-2})}{\frac{1}{\Gamma(\beta+1)}+\frac{(-\lambda_k T^\alpha)^{-1}}{\Gamma(\beta-\alpha+1)}+O((\lambda_k T^\alpha)^{-2})}\Bigg]
$$
$$
=\bigg(\frac{t_0}{T}\bigg)^\beta \Bigg[1+\frac{\frac{C_1}{\lambda_k}(t_0^{-\alpha}-T^{-\alpha})+O(\lambda_k^{-2})}{C_2+\frac{C_1}{\lambda_k T^\alpha}+O(\lambda_k^{-2})}\Bigg]
$$
$$
=\bigg(\frac{t_0}{T}\bigg)^\beta \Bigg[1+\frac{\frac{C_1}{\lambda_k}(t_0^{-\alpha}-T^{-\alpha})}{C_2+\frac{C_1}{\lambda_k T^-{\alpha}}+O(\lambda_k^{-2})}+\frac{O(\lambda_k^{-2})}{C_2+\frac{C_1}{\lambda_k T^\alpha}+O(\lambda_k^{-2})}\Bigg]
$$
$$
=\bigg(\frac{t_0}{T}\bigg)^\beta \Bigg[1+\frac{\frac{C_1}{C_2\lambda_k}(t_0^{-\alpha}-T^{-\alpha})\bigg(1+\frac{C_1}{C_2 \lambda_k T^\alpha}+O(\lambda_k^{-2})-\frac{C_1}{C_2 \lambda_k T^\alpha}-O(\lambda_k^{-2})\bigg)}{1+\frac{C_1}{C_2\lambda_k T^{-\alpha}}+O(\lambda_k^{-2})}+\frac{O(\lambda_k^{-2})}{C_2+\frac{C_1}{\lambda_k T^\alpha}+O(\lambda_k^{-2})}\Bigg]
$$
$$
=\bigg(\frac{t_0}{T}\bigg)^\beta \Bigg[1+\frac{C_1(t_0^{-\alpha}-T^{-\alpha})}{C_2}\frac{1}{\lambda_k}+O(\lambda_k^{-2})+\frac{O(\lambda_k^{-2})}{C_2+\frac{C_1}{\lambda_k T^\alpha}+O(\lambda_k^{-2})}\Bigg]
$$
$$
=\bigg(\frac{t_0}{T}\bigg)^\beta\Bigg[1+\frac{P}{\lambda_k}+O(\lambda_k^{-2})\Bigg],
$$
where 
$$
C_1=\frac{1}{\Gamma(\beta-\alpha+1)}, \quad  C_2=\frac{1}{\Gamma(\beta+1)}, \quad P=\frac{\Gamma(\beta-\alpha+1)}{\Gamma(\beta+1)}(t_0^{-\alpha}-T^{-\alpha}).
$$
\end{proof}

\begin{lem}\label{Lemma4.2}
The following  cases are possible:
\\
a) If $\gamma>1$, then  $K_0$ is empty and the following estimate holds:
\begin{equation}\label{deltak}
    |\Delta_k(\gamma,t_0,T)|\geq\frac{C}{\lambda_k}, \quad k\geq 1;
\end{equation} 
b) Let $\gamma<1$. If  condition $T^\beta< t_0^\beta|\gamma-1|$ holds, then the set $K_0$
is empty, and the estimate \eqref{deltak} is valid. Otherwise, that is, if condition $T^\beta\geq t_0^\beta |\gamma-1|$ holds, then the set $K_0$ is non-empty, but if it  can only have a finite number of elements, then  there exists a number $k_0$ such that for all $k\geq k_0$ the following estimate holds  
\begin{equation}\label{lambdak2}
|\Delta_k(\gamma,t_0,T)|\geq\frac{C}{\lambda_k^2}.
\end{equation}
\end{lem}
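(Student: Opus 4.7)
The plan is to factor the expression for $\Delta_k$ so that the decay rate in $\lambda_k$ can be read off directly from Lemmas \ref{Lemma4.1} and \ref{Lemma6}. Writing
$$\Delta_k = -T^{\alpha+\beta} E_{\alpha,\alpha+\beta+1}(-\lambda_k T^\alpha)\cdot Q_k, \qquad Q_k := 1 - (1-\gamma)\frac{t_0^{\alpha+\beta} E_{\alpha,\alpha+\beta+1}(-\lambda_k t_0^\alpha)}{T^{\alpha+\beta} E_{\alpha,\alpha+\beta+1}(-\lambda_k T^\alpha)},$$
Lemma \ref{Lemma4.1} gives, for large $k$, $Q_k = 1 - (1-\gamma)(t_0/T)^\beta[1 + P/\lambda_k + O(\lambda_k^{-2})]$, and Lemma \ref{Lemma6} applied with $n=1$ to $E_{\alpha,\alpha+\beta+1}$ yields the two-sided comparison $T^{\alpha+\beta} E_{\alpha,\alpha+\beta+1}(-\lambda_k T^\alpha) \asymp \lambda_k^{-1}$ as $k \to \infty$. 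Thus the size of $|\Delta_k|$ reduces to the size of $|Q_k|$ times $\lambda_k^{-1}$.

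For part (a) I would exploit the positivity $E_{\alpha,\alpha+\beta+1}(-x) > 0$ on $[0,\infty)$, which follows from the complete monotonicity of this Mittag--Leffler function for $0<\alpha<1$, $\alpha+\beta+1>\alpha$. When $\gamma>1$, the factor $1-\gamma$ is negative, so the two summands of $\Delta_k$ are both strictly negative and hence $\Delta_k<0$ for every $k\ge 1$; in particular $K_0=\emptyset$. The bound $|\Delta_k|\ge T^{\alpha+\beta} E_{\alpha,\alpha+\beta+1}(-\lambda_k T^\alpha)\ge C/\lambda_k$ then follows from the asymptotic for large $k$, and the finitely many small-$k$ indices contribute only a positive minimum that can be absorbed into $C$.

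For part (b) I would split on the sign of the limiting constant $1-(1-\gamma)(t_0/T)^\beta$ of $Q_k$. In the strict case $T^\beta<t_0^\beta(1-\gamma)$ it is negative, and in the strict case $T^\beta>t_0^\beta(1-\gamma)$ it is positive; in either strict case $|Q_k|$ is bounded below by a positive constant for $k$ large, giving $|\Delta_k|\ge C/\lambda_k$. In the borderline case $T^\beta=t_0^\beta(1-\gamma)$ the leading constants cancel and the expansion reduces to $Q_k=-P/\lambda_k+O(\lambda_k^{-2})$ with $P\ne 0$ because $t_0<T$, so $|\Delta_k|\ge C/\lambda_k^{2}$. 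Once $|\Delta_k|>0$ for all $k\ge k_0$, finiteness of $K_0$ is automatic: $K_0\subset\{1,\dots,k_0-1\}$.

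The main obstacle I anticipate is the assertion that $K_0$ is genuinely \emph{empty} (not merely finite) in the first subcase of (b), since the asymptotic analysis above only controls $k$ large. To close this gap I would use the integral representation $s^{\alpha+\beta}E_{\alpha,\alpha+\beta+1}(-\lambda s^\alpha)=\int_0^s\eta^{\alpha+\beta-1}E_{\alpha,\alpha+\beta}(-\lambda\eta^\alpha)\,d\eta$ from Lemma \ref{Lemma5} together with the complete monotonicity of $E_{\alpha,\alpha+\beta}(-x)$ to derive a monotonicity property of the ratio in $\lambda$, and then compare the limits at $\lambda\to 0^+$ and $\lambda\to\infty$ to exclude zeros on $(0,\infty)$. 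Matching these two limiting signs is delicate, because $(1-\gamma)(t_0/T)^\beta$ and $(1-\gamma)(t_0/T)^{\alpha+\beta}$ need not lie on the same side of $1$; resolving this may ultimately require a slightly sharper formulation of the hypothesis than the one printed.
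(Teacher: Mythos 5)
The paper offers no proof of this lemma (it only points to \cite{AshurovNuralieva}), so your argument has to stand on its own. The quantitative part of your proposal is sound: the factorization $\Delta_k=-T^{\alpha+\beta}E_{\alpha,\alpha+\beta+1}(-\lambda_kT^\alpha)\,Q_k$ combined with Lemma \ref{Lemma4.1} and the $n=1$ expansion from Lemma \ref{Lemma6} is the right mechanism; part (a) is complete (positivity of $E_{\alpha,\alpha+\beta+1}(-x)$ makes both summands strictly negative when $\gamma>1$, so $K_0=\emptyset$ and $|\Delta_k|\ge T^{\alpha+\beta}E_{\alpha,\alpha+\beta+1}(-\lambda_kT^\alpha)\ge C/\lambda_k$), and your case split on the sign of $1-(1-\gamma)(t_0/T)^\beta$, with the borderline case rescued by $P\neq0$, correctly yields \eqref{deltak} or \eqref{lambdak2} for all sufficiently large $k$ and hence the finiteness of $K_0$.

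The difficulty you flag at the end, however, is a genuine gap and not one your monotonicity strategy can close. Set $\Delta(\lambda)=(1-\gamma)t_0^{\alpha+\beta}E_{\alpha,\alpha+\beta+1}(-\lambda t_0^{\alpha})-T^{\alpha+\beta}E_{\alpha,\alpha+\beta+1}(-\lambda T^{\alpha})$. Then $\Delta(0)=\bigl[(1-\gamma)t_0^{\alpha+\beta}-T^{\alpha+\beta}\bigr]/\Gamma(\alpha+\beta+1)$, while by Lemma \ref{Lemma6} the sign of $\Delta(\lambda)$ for large $\lambda$ is that of $(1-\gamma)t_0^{\beta}-T^{\beta}$. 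Since $t_0<T$ implies $(T/t_0)^{\beta}<(T/t_0)^{\alpha+\beta}$, the non-empty parameter window $(T/t_0)^{\beta}<1-\gamma<(T/t_0)^{\alpha+\beta}$ lies entirely inside the hypothesis $T^{\beta}<t_0^{\beta}|\gamma-1|$ of the first subcase of (b); for such parameters $\Delta(0)<0$ but $\Delta(\lambda)>0$ for large $\lambda$, so $\Delta$ vanishes at some $\lambda^{*}>0$. Nothing in the hypotheses prevents an eigenvalue $\lambda_k$ from equalling, or clustering near, $\lambda^{*}$, so neither $K_0=\emptyset$ nor the uniform bound $|\Delta_k|\ge C/\lambda_k$ for \emph{all} $k\ge1$ follows. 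Establishing monotonicity of the ratio $h(T)/h(t_0)$ via Lemma \ref{Lemma5} would only confirm that its range is the interval between $(T/t_0)^{\alpha+\beta}$ and $(T/t_0)^{\beta}$, i.e., that the zero really occurs. The emptiness claim is salvageable only under the stronger hypothesis $T^{\alpha+\beta}<t_0^{\alpha+\beta}(1-\gamma)$ (so that $\Delta$ has the same sign at both ends, plus the monotonicity argument to rule out interior zeros), or by explicitly assuming the eigenvalues avoid the finitely many zeros of $\Delta(\cdot)$. Your closing suspicion that the printed hypothesis needs sharpening is correct; as written, this subcase of the lemma is not proved by your argument, and does not appear to be true.
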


This lemma can be proved using the same method as in the article \cite{AshurovNuralieva}.

Using estimates of the denominator $\Delta_k(\gamma,t_0, T)$, we can investigate the existence and uniqueness of the solution to the inverse problem.

\begin{thm}\label{theorem4.1}
Let $\omega$, $\psi$, $\varphi \in D(A)$. If either $\gamma>1$ holds, or both $\gamma<1$ and $T^\beta< t_0^\beta|\gamma-1|$ are satisfied concurrently. Then, the inverse problem has a unique solution $\{u(t),f\}$ given by:
$$
    u(t)=\sum_{k=1}^\infty \bigg[\frac{\varphi_k-f_k T^{\alpha+\beta}E_{\alpha,\alpha+\beta+1}(-\lambda_k T^\alpha)-\psi_k T^\alpha E_{\alpha,\alpha+1}(-\lambda_k T^\alpha)}{1-\gamma}
$$
\begin{equation}\label{solinv}
+\psi_k t^\alpha E_{\alpha,\alpha+1}(-\lambda_k t^\alpha)+f_k t^{\alpha+\beta}E_{\alpha,\alpha+\beta+1}(-\lambda_k t^\alpha)\bigg]v_k
\end{equation}
 and
\begin{equation}\label{finvser}
    f=\sum_{k=1}^\infty f_k v_k,
\end{equation} where
$$
    f_k=\frac{(1-\gamma)\omega_k-\varphi_k +\psi_k \bigg(T^\alpha E_{\alpha,\alpha+1}(-\lambda_k T^\alpha)-(1-\gamma)t_0^\alpha E_{\alpha,\alpha+1}(-\lambda_k t_0^\alpha\bigg)}{\Delta_k(\gamma,t_0,T)} ,
$$
  $\varphi_k$, $\psi_k$ and $\omega_k$ are the Fourier coefficients of the elements  $\varphi$, $\psi$ and $\omega$,respectively.
\end{thm}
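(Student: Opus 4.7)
The plan is first to construct $f\in H$ from the given data using the denominator estimate supplied by Lemma~\ref{Lemma4.2}, then to invoke Corollary~1 to obtain the corresponding forward solution $u(t)$, and finally to derive uniqueness via a standard linearization argument.

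Under either hypothesis of the theorem, Lemma~\ref{Lemma4.2} guarantees that $K_0=\emptyset$ and
\[
|\Delta_k(\gamma,t_0,T)|\geq \frac{C}{\lambda_k},\qquad k\geq 1,
\]
so each $f_k$ defined by the formula in the theorem is meaningful. Applying Lemma~\ref{Lemma1} to bound the Mittag-Leffler factors in the numerator I obtain $T^\alpha|E_{\alpha,\alpha+1}(-\lambda_k T^\alpha)|\leq C/\lambda_k$, and analogously at $t_0$, whence
\[
|f_k|^2\leq C\bigl(\lambda_k^2|\omega_k|^2+\lambda_k^2|\varphi_k|^2+|\psi_k|^2\bigr).
\]
Summing in $k$ and using $\omega,\varphi,\psi\in D(A)$ yields
\[
\|f\|^2\leq C\bigl(\|\omega\|_1^2+\|\varphi\|_1^2+\|\psi\|^2\bigr)<\infty.
\]

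With this $f\in H$ in hand, Corollary~1 produces the forward solution $u(t)$ in the form \eqref{solinv} and guarantees the regularity required by Definition~\ref{def2}. The overdetermination condition $u(t_0)=\omega$ then holds by construction: substituting $t=t_0$ into \eqref{solinv}, projecting onto $v_k$, and inserting the defining relation \eqref{fksol} for $f_k$, the completeness of $\{v_k\}$ reproduces $\omega$.

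For uniqueness, suppose $\{u_j,f^{(j)}\}$, $j=1,2$, are two solutions of the inverse problem. The pair $\tilde u=u_1-u_2$, $\tilde f=f^{(1)}-f^{(2)}$ solves \eqref{prob} with $\varphi=\psi=0$ and source $\tilde f$, together with $\tilde u(t_0)=0$. Applying Corollary~1 to this data and evaluating at $t=t_0$ yields $\Delta_k(\gamma,t_0,T)\,\tilde f_k=0$ for every $k$, so $\tilde f\equiv 0$ by Lemma~\ref{Lemma4.2}, and the uniqueness part of Theorem~\ref{Theorem3} then forces $\tilde u\equiv 0$. The main obstacle in the existence half is precisely the convergence of \eqref{finvser}: the division by $\Delta_k$ costs one factor of $\lambda_k$, and this loss is absorbed exactly by the hypothesis $\omega,\varphi\in D(A)$. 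Tracking the constants through this argument also yields a stability inequality of the form $\|f\|^2+\|u\|^2\leq C(\|\omega\|_1^2+\|\varphi\|_1^2+\|\psi\|^2)$.
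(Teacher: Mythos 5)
Your proposal is correct and follows essentially the same route as the paper: both use the denominator bound $|\Delta_k|\geq C/\lambda_k$ from Lemma~\ref{Lemma4.2} together with Lemma~\ref{Lemma1} to show $f\in H$ under $\omega,\varphi,\psi\in D(A)$, then transfer the regularity of $u(t)$ to the already-solved forward problem, and obtain uniqueness by the standard linear reduction to homogeneous data forcing $f_k=0$. (Your bound on the $\psi$-term is in fact slightly sharper than the paper's, needing only $\psi\in H$ for that piece, but this does not change the statement.)
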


\begin{proof}\textbf{ Existence.} Let's show the convergence of series \eqref{finvser}. If $F_j$ are the partial sums of series \eqref{finvser}, then according to the Parseval identity, we may write the following equality:
$$
||F_j||^2=\sum_{k=1}^j\Bigg|\frac{(1-\gamma)\omega_k-\varphi_k +\psi_k \bigg(T^\alpha E_{\alpha,\alpha+1}(-\lambda_k T^\alpha)-(1-\gamma)t_0^\alpha E_{\alpha,\alpha+1}(-\lambda_k t_0^\alpha\bigg)}{\Delta_k(\gamma,t_0,T)}\Bigg|^2
$$
$$
\leq 3\sum_{k=1}^j \Bigg|\frac{(1-\gamma)\omega_k}{\Delta_k(\gamma,t_0,T)}\Bigg|^2
+3\sum_{k=1}^j\Bigg|\frac{\varphi_k}{(\Delta_k(\gamma,t_0,T)}\Bigg|^2
$$$$+3\sum_{k=1}^j\Bigg|\frac{\psi_k (T^\alpha E_{\alpha,\alpha+1}(-\lambda_k T^\alpha)-(1-\gamma)t_0^\alpha E_{\alpha,\alpha+1}(-\lambda_k t_0^\alpha)}{\Delta_k(\gamma,t_0,T)}\Bigg|^2
=3F_{j,1}+3F_{j,2}+3F_{j,3} ,$$
where
$$
F_{j,1}=\sum_{k=1}^j \Bigg|\frac{(1-\gamma)\omega_k}{\Delta_k(\gamma,t_0,T)}\Bigg|^2,
\quad
F_{j,2}=\sum_{k=1}^j\Bigg|\frac{\varphi_k}{\Delta_k(\gamma,t_0,T)}\Bigg|^2,
$$
$$
F_{j,3}=\sum_{k=1}^j\Bigg|\frac{\psi_k \bigg(T^\alpha E_{\alpha,\alpha+1}(-\lambda_k T^\alpha)-(1-\gamma)t_0^\alpha E_{\alpha,\alpha+1}(-\lambda_k t_0^\alpha\bigg)}{\Delta_k(\gamma,t_0,T)}\Bigg|^2.
$$
From  Lemma \ref{Lemma1} and Case b of Lemma \ref{Lemma4.2}, we gave the following estimates for $F_{j, i},i=1,2,3$:
$$F_{j,1}\leq C \sum_{k=1}^j \lambda_k^2 |\omega_k|^2, \quad F_{j,2}\leq C \sum_{k=1}^j \lambda_k^2|\varphi_k|^2,$$
$$F_{j,3}\leq C \sum_{k=1}^j \lambda_k^2 |\psi_k|^2.$$
Thus, if $\omega$, $\varphi$, $\psi\in D(A)$, then from the estimates of $F_{j,i}$, we obtain $f\in H$.

After finding the unknown function $f \in H$, the fulfilment of the conditions of Definition \ref{def2} for function $u(t)$, defined by series \eqref{solinv}, is proved in the same way as with Theorem \ref{theorem2}.

\textbf{Uniqueness.}
To prove the uniqueness of the solution, we use the standard technique, that is, the solution of problem \eqref{prob} with the homogeneous condition (i.e, $\varphi_k=0$, $\psi_k=0$ and $\omega_k=0$)  is identically zero. Then, due to \eqref{finvser}, it follows that $f_k\equiv0$ for all $k\geq1$. This follows from the completeness of the system $\{v_k\}$, in which case we obtain $ f\equiv 0$ and $u(t)\equiv 0$, as required.

The theorem has been proved.
\end{proof}

Eventually, when $\Delta_k(\gamma,t_0, T)=0$ for some $k \in \mathbb{N}$ or $K_0$ is non-empty, we will only require the existence of a solution $\{u(t),f\}$.

\begin{thm}\label{theorem4.2}

Let $\omega$, $\psi$, $\varphi \in D(A^2)$,  $\gamma<1$ and $T^\beta\geq t_0^\beta |\gamma-1|$, for some $\gamma, t_0$ and $T$.
Then, for the existence of the solution $\{u(t),f\}$ to the inverse problem, it is necessary and sufficient that the following condition 
\begin{equation}
    (1-\gamma)\omega_k-\varphi_k +\psi_k \bigg(T^\alpha E_{\alpha,\alpha+1}(-\lambda_k T^\alpha)-(1-\gamma)t_0^\alpha E_{\alpha,\alpha+1}(-\lambda_k t_0^\alpha\bigg)=0
\end{equation}  is satisfied.
In this case  a pair of solutions $\{u(t),f\}$ is not unique and has the following representations:
$$
    u(t)=\sum_{k\notin K_0} \bigg[\frac{\varphi_k-f_k T^{\alpha+\beta}E_{\alpha,\alpha+\beta+1}(-\lambda_k T^\alpha)-\psi_k T^\alpha E_{\alpha,\alpha+1}(-\lambda_k T^\alpha)}{1-\gamma}
$$
$$
+\psi_k t^\alpha E_{\alpha,\alpha+1}(-\lambda_k t^\alpha)+f_k t^{\alpha+\beta}E_{\alpha,\alpha+\beta+1}(-\lambda_k t^\alpha)\bigg]v_k
$$
$$
+\sum_{k\in K_0}\bigg[\frac{\varphi_k-\bar f_k T^{\alpha+\beta}E_{\alpha,\alpha+\beta+1}(-\lambda_k T^\alpha)-\psi_k T^\alpha E_{\alpha,\alpha+1}(-\lambda_k T^\alpha)}{1-\gamma}
$$
\begin{equation}
+\psi_k t^\alpha E_{\alpha,\alpha+1}(-\lambda_k t^\alpha)+\bar f_k t^{\alpha+\beta}E_{\alpha,\alpha+\beta+1}(-\lambda_k t^\alpha)\bigg]v_k,
\end{equation}
\begin{equation}\label{fknonempty}
f=\sum_{k\in K_0} \bar f_k v_k+\sum_{k\notin K_0}f_k v_k,
\end{equation}
where $$f_k=\frac{(1-\gamma)\omega_k-\varphi_k -\psi_k\Delta_k(\gamma,t_0,T)}{\Delta_k(\gamma,t_0,T)},$$
arbitrary coefficients $\bar f_k,k\in K_0$.
\end{thm}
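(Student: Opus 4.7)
The strategy is to exploit the scalar equation \eqref{fksol} that determines each Fourier coefficient $f_k$, split the index set into $k\in K_0$ (where $\Delta_k(\gamma,t_0,T)=0$) and $k\notin K_0$, and then control the resulting series via the estimate \eqref{lambdak2} of Lemma \ref{Lemma4.2}(b). Since that lemma guarantees, under the stated hypothesis on $\gamma$, $t_0$, $T$, that $K_0$ consists of only finitely many indices, the exceptional terms can be dealt with by hand.

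For the necessity direction, I would take any solution pair $\{u(t),f\}$ of the inverse problem, expand in the eigenbasis $\{v_k\}$, and evaluate at $t=t_0$, reproducing \eqref{fksol} for every $k\geq 1$. When $k\in K_0$ the coefficient of $f_k$ on the left-hand side vanishes, forcing the right-hand side to vanish identically; this is precisely the stated compatibility condition. For sufficiency and the explicit form, assuming compatibility holds, I would define $f_k$ by dividing \eqref{fksol} through by $\Delta_k$ whenever $k\notin K_0$ and leave $\bar f_k$ free for each $k\in K_0$. To show $f\in H$, I would use Lemma \ref{Lemma1} to bound the Mittag-Leffler factors in the numerator uniformly, apply $|\Delta_k|\geq C/\lambda_k^{2}$ for large $k$, and obtain
$$
|f_k|^2\leq C\lambda_k^{4}\bigl(|\omega_k|^2+|\varphi_k|^2+|\psi_k|^2\bigr),\qquad k\notin K_0,\ k\geq k_0.
$$
Summing and invoking $\omega,\varphi,\psi\in D(A^2)$ yields $f\in H$; the contribution from $k\in K_0$ is a finite sum and therefore harmless, regardless of the choice of the $\bar f_k$.

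Once $f\in H$ is in hand, verifying that the series for $u(t)$ belongs to $AC([0,T];H)$ with the regularities required by Definition \ref{def2}, and satisfies all the conditions of Problem \eqref{prob} together with $u(t_0)=\omega$, reduces to the same Parseval and Mittag-Leffler arguments already employed in Theorem \ref{theorem2} and Corollary 1. Crucially, the overdetermination $u(t_0)=\omega$ holds by construction for $k\notin K_0$, and for $k\in K_0$ it collapses to the compatibility hypothesis, since the coefficient of $\bar f_k$ in the $k$-th Fourier mode of $u(t_0)$ is exactly $\Delta_k=0$. Non-uniqueness is then transparent: each of the finitely many free parameters $\bar f_k$, $k\in K_0$, produces a genuinely distinct pair $\{u(t),f\}$. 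The main bookkeeping obstacle I anticipate is confirming that the two powers of $\lambda_k$ lost in the denominator estimate \eqref{lambdak2} are matched exactly by the $D(A^{2})$ regularity of the data across all three terms appearing in the numerator of $f_k$, so that no additional hypothesis is smuggled in for either the $f$-series or the $u$-series.
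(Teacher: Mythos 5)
Your proposal is correct and follows essentially the same route as the paper: split the Fourier modes into $k\in K_0$ (finitely many, handled by the compatibility condition and free parameters $\bar f_k$) and $k\notin K_0$ (where $f_k$ is obtained by dividing \eqref{fksol} by $\Delta_k$, with convergence secured by the bound $|\Delta_k|\geq C/\lambda_k^2$ of \eqref{lambdak2} and the $D(A^2)$ hypothesis), then verifying the $u$-series as in Theorems \ref{theorem2} and \ref{theorem4.1}. Your write-up is in fact more explicit than the paper's on the necessity direction and on why $u(t_0)=\omega$ is automatic for $k\in K_0$, and you correctly use the numerator from \eqref{fksol} rather than the misprinted $-\psi_k\Delta_k$ appearing in the theorem's displayed formula for $f_k$.
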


\begin{proof}
To establish the theorem, it is necessary to verify that the series \eqref{fknonempty} fulfills all the requirements specified in Definition \ref{def2}. This result follows directly from the proof of Theorem \ref{theorem4.1}, and the argument remains nearly identical regardless of which of the conditions is satisfied.
The series \eqref{fknonempty} is composed of two parts. The first part, as established by Lemma \ref{Lemma4.2}, is a finite sum of smooth functions.
For the second part, we can verify that the series satisfies the conditions of Definition \ref{def2} using the same approach as for series \eqref{finvser}. In this case, the lower bound \eqref{lambdak2} for $\Delta_k(\gamma,t_0,T)$ is applied.
\end{proof}

\end{document}